\newcommand{\xkh}[1]{\left(#1\right)}
\newcommand{\dkh}[1]{\left\{#1\right\}}
\newcommand{\zkh}[1]{\left[#1\right]}
\newcommand{\nj}[1]{\langle {#1} \rangle}
\newcommand{\norm}[1]{\|{#1}\|_2}
\newcommand{\abs}[1]{\left\lvert#1\right\rvert}
\newcommand{\E}{{\mathbb E}}
\newcommand{\PP}{{\mathbb P}}
\newcommand{\1}{{\mathds 1}}
\newcommand{\R}{{\mathbb R}}
\newcommand{\C}{{\mathbb C}}
\newcommand{\CS}{{\mathbb {S}}}
\newcommand{\vx}{{\bm x}}
\newcommand{\vy}{{ \bm{ y}}}
\newcommand{\vv}{{\bm v}}
\newcommand{\vz}{{\bm z}}
\newcommand{\ve}{{\bm e}}
\newcommand{\va}{{\bm a}}
\newcommand{\vh}{{\bm h}}
\newcommand{\cN}{{\mathcal N}}
\newcommand{\dist}{{\rm dist}}
\renewcommand{\omega}{\eta}
\newcommand{\RNum}[1]{\uppercase\expandafter{\romannumeral #1\relax}}
\newtheorem{definition}{Definition}[section]
\newtheorem{theorem}[definition]{Theorem}
\newtheorem{lemma}[definition]{Lemma}
\newtheorem{remark}[definition]{Remark}
\date{}
\begin{document}
\baselineskip 19pt
\bibliographystyle{plain}
\title[Linear convergence of randomized Kaczmarz method]{Linear convergence of randomized Kaczmarz method for solving complex-valued phaseless equations}
\author{Meng Huang}
\address{Department of Mathematics, The Hong Kong University of Science and Technology,
Clear Water Bay, Kowloon, Hong Kong, China} \email{menghuang@ust.hk}

\author{Yang Wang}
\thanks{Y. Wang was supported in part by the Hong Kong Research Grant Council grants 16306415 and 16308518.}
\address{Department of Mathematics, The Hong Kong University of Science and Technology,
Clear Water Bay, Kowloon, Hong Kong, China} \email{yangwang@ust.hk}

\maketitle

\begin{abstract}
A randomized Kaczmarz method was recently proposed for phase retrieval, which has been shown numerically to exhibit empirical performance over other state-of-the-art phase retrieval algorithms both in terms of the sampling complexity and in terms of computation time. While the rate of convergence has been studied well in the real case where the signals and measurement vectors are all real-valued, there is no guarantee for the convergence in the complex case. In fact,  the linear convergence of the randomized Kaczmarz method for phase retrieval in the complex setting is left as a conjecture by Tan and Vershynin.
In this paper, we provide the first theoretical guarantees for it.
We show that for random measurements $\va_j \in \C^n, j=1,\ldots,m $ which are drawn independently and uniformly from the complex unit sphere, or equivalent are independent complex Gaussian random vectors,
when $m\ge Cn$ for some universal positive constant $C$, the randomized Kaczmarz scheme with a good initialization converges linearly to the target solution (up to a global phase) in expectation with high probability. This gives a positive answer to that conjecture.

\end{abstract}

\section{Introduction}
\subsection{Problem setup}
Let $\vx \in \C^n$ (or $\R^n$) be an arbitrary unknown vector. We consider the problem of recovering $\vx$ from the phaseless equations:
\begin{equation} \label{eq:problesetup}
b_j=\abs{\nj{\va_j,\vx}}, \quad j=1,\ldots,m,
\end{equation}
where $\va_j \in \C^n$ (or $\R^n$) are known sampling vectors and $b_j \in \R$ are observed measurements.
This problem, termed as {\em phase retrieval},  has been a topic of study from 1980s due to its wide range of practical applications in fields of physical sciences and engineering, such as X-ray crystallography \cite{harrison1993phase,millane1990phase}, diffraction imaging \cite{shechtman2015phase,chai2010array}, microscopy \cite{miao2008extending}, astronomy \cite{fienup1987phase}, optics and acoustics \cite{walther1963question, balan2010signal,balan2006signal} etc, where the  detector can record only the diffracted intensity while losing the phase information. 
Despite its simple mathematical form, it has been shown that to reconstruct a finite-dimensional discrete signal from its Fourier transform magnitudes is generally NP-complete \cite{Sahinoglou}.
Another special case of solving this phaseless equations is the well-known {\em stone problem} in combinatorial optimization, which is also NP-complete \cite{ben2001lectures}.

To solve \eqref{eq:problesetup}, we employ the randomized Kaczmarz method where the update rule is given by 
\begin{equation} \label{eq:updateformula}
\vz_{k+1}=\vz_k-\xkh{1-\frac{b_{i_k}}{|\va_{i_k}^*\vz_k|}} \frac{\va_{i_k}^*\vz_k}{\norm{\va_{i_k}}^2} \va_{i_k},
\end{equation}
where $i_k$ is chosen randomly from the $\dkh{1,\ldots,m}$ with probability proportional to $\norm{\va_{i_k}}^2$ at the $k$-th iteration.  
Actually, the update rule above is a natural adaption of the classical randomized Kaczmarz method \cite{karczmarz1937angenaherte} for solving linear equations.
The idea behind the scheme is simple. When the iteration is close enough to the signal vector $\vx$,  the phase information can be approximated by that of the current estimate. 
Thus,  in each iteration, we first select a measurement vector $\va_{i_k}$ randomly, and then project the current estimate $\vz_k$ onto the hyperplane 
\[
\dkh{\vz \in \C^n: \nj{\va_{i_k},\vz}=b_{i_k} \cdot \frac{\va_{i_k}^* \vz_k}{|\va_{i_k}^* \vz_k|}}.
\]
That gives the scheme \eqref{eq:updateformula}.

We are interested in the following questions:

 {\em Does the randomized Kaczmarz scheme \eqref{eq:updateformula} converge to the target solution $\vx$ (up to a global phase)  in the complex setting? Can we establish the rate of convergence?}


\subsection{Motivation}
The randomized Kaczmarz method for solving phase retrieval problem was proposed by Wei \cite{Wei2015} in 2015. It has been demonstrated in \cite{Wei2015} using numerical experiments that 
the randomized Kaczmarz method exhibits empirical performance over other state-of-the-art phase retrieval algorithms both in terms of the sampling complexity and in terms of computation time, when 
the measurements are real or complex Gaussian random vectors, or when they follow the coded diffraction pattern (CDP) model.
However, no adequate theoretical guarantee for the convergence  was established in \cite{Wei2015}. 
To bridge the gap, for the real Gaussian measurement vectors, Li et al.  \cite{ligen2015} establish an asymptotic convergence of the randomized Kaczmarz method for
phase retrieval, but it requires infinite number of samples, which is unrealistic in practical. 
Lately, Tan and Vershynin \cite{tan2019phase} use the chain argument coupled with bounds on Vapnik-Chervonenkis (VC) dimension and metric entropy,  and then prove theoretically that the randomized Kaczmarz method for phase retrieval is linealy convergent with $O(n)$ Gaussian random measurements, where $n$ is the dimension of the signal.  
A result  almost same to that of \cite{tan2019phase}   is also obtained independently by Jeong and Gunturk \cite{jeong2017convergence} using the tools of hyperplane tessellation and ``drift analysis''. 
Another similar conditional error contractivity result is also established by Zhang et al. \cite{RWF}, which is called {\em incremental reshaped Wirtinger flow}.

We shall emphasize that all results concerning the convergence of the randomized Kaczmarz method for phase retrieval are for the real case where the signals and measurement vectors are all real-valued.
Since the phase can only be $+1$ or $-1$ in the real case, then the measurement vectors can be divided into  ``good measurements'' with correct phase  and ``bad measurements'' with incorrect phase. 
When the initial point is close enough to the true solution, the total influence of ``bad measurements''  can be well controlled.
However, this is not true for the complex measurements because $\vx e^{i \theta}$  is continuous with respect to $\theta\in [0, 2\pi)$.
For this reasons, the proofs for the real case can not be generalized to the complex setting easily.
As stated in  \cite[Section 7.2]{tan2019phase}, the linear convergence of the randomized Kaczmarz method for phase retrieval in the complex setting is left as a {\em CONJECTURE}.
We shall point out that the convergence of randomized Kaczmarz method for phase retrieval in complex setting is of more  practical  interest.

In this paper, we aim to prove this conjecture by introducing a deterministic condition on measurement vectors called ``Restricted Strong Convexity''  and then showing that the random measurements drawn independently 
and uniformly from the complex-valued sphere, or equivalently for the complex Gaussian random vectors,  satisfy this condition with high probability, as long as the measurement number $m\ge O(n)$.

\subsection{Related Work}
\subsubsection{Phase retrieval}
The phase retrieval problem, which aims to recover $\vx$ from phaseless equations \eqref{eq:problesetup}, has received intensive investigations recently.
Note that if  $\vz$ is a solution to \eqref{eq:problesetup} then $\vz e^{i\theta}$ is also the solution of this problem for any $\theta\in \R$. Therefore the recovery of the solution $\vx$ is up to a global phase.
It has been shown theoretically that $m\ge 4n-4$  generic measurements  suffice to recover $\vx$  for the complex case \cite{conca2015algebraic,wangxu} and $m\ge 2n-1$ are sufficient for the real case \cite{balan2006signal}.

Many algorithms with provable performance guarantees have been designed to solve the phase retrieval problem. One line of research relies on a “matrix-lifting” technique, 
which lifts the phase retrieval problem into a low rank matrix recovery problem, 
and then a nuclear norm minimization is adopted as a convex surrogate of the rank constraint. Such methods include PhaseLift \cite{phaselift,Phaseliftn}, PhaseCut \cite{Waldspurger2015}  etc.
While this convex methods have a substantial advance in theory, they tend to be computationally inefficient for large scale problems.
Another line of research seeks to optimize a non-convex loss function in the natural parameter space,  which achieves significantly improved computational performance.
The first non-convex algorithm with theoretical guarantees was given by Netrapalli et al  who proved that the AltMinPhase \cite{AltMin}  algorithm, based on a technique known as {\em spectral initialization},
converges linearly to the true solution up to a global phase with $O(n \log^3 n)$ resampling Gaussian random measurements. 
This work led to further several other non-convex algorithms based on spectral initialization \cite{bendory2017non, WF, TWF, waldspurger2018phase,huang2020solving}. 
Specifically, Cand\`es et al developed the Wirtinger Flow (WF)  \cite{WF}  method and proved that the WF algorithm can achieve linear convergence with $O(n \log n)$ Gaussian random measurements. 
Lately, Chen and Cand\`es improved the result to $O(n)$ Gaussian random measurements by incorporating a truncation, namely the Truncated Wirtinger Flow (TWF) \cite{TWF} algorithm.
Other non-convex methods with provable guarantees include the Gauss-Newton \cite{Gaoxu}, the trust-region \cite{turstregion}, Smoothed Amplitude Flow \cite{cai2021solving}, Truncated Amplitude Flow (TAF) algorithm \cite{TAF},  Reshaped Wirtinger Flow (RWF) \cite{RWF} algorithm and Perturbed Amplitude Flow (PAF) \cite{PAF} algorithm, to name just a few. 
 We refer the reader to survey papers  \cite{jaganathan2016phase,shechtman2015phase} for accounts of recent developments in the theory, algorithms and applications of phase retrieval.

\subsubsection{Randomized Kaczmarz method for linear equations}
Kaczmarz method is one of the most popular algorithms for solving overdetermined system of linear equations \cite{karczmarz1937angenaherte}, which iteratively project the current estimate onto the hyperplane of chosen equation at a time.
Suppose the system of linear equations we want to solve is given by $A\vx=\vy$, where $A\in \C^{m\times n}$.  In each iteration of the Kaczmarz method,  one row $\va_{i_k}$ of $A$ is selected and then the new 
iterate $\vz_{k+1}$ is obtained by projecting the current estimate $\vz_k$ orthogonally onto the solution hyperplane of $\nj{\va_{i_k},\vz}=y_{i_k}$ as follows:
\begin{equation} \label{eq:linearkacz}
\vz_{k+1}= \vz_k+\frac{y_{i_k}-\nj{\va_{i_k},\vz_k}}{\norm{\va_{i_k}}^2} \va_{i_k}.
\end{equation}
The classical version of Kaczmarz method sweeps through the rows of A in a cyclic manner, however, it lacks useful theoretical guarantees. Existing results in this manner are based on quantities of matrix $A$ which are hard 
to compute \cite{deutsch1,deutsch2,Galntai}. 
In 2009, Strohmer and Vershyinin \cite{strohmer2009r} propose a randomized Kaczmarz method where the row of $A$ is selected in random order and they prove that 
this randomized Kaczmarz method  is convergent with expected exponential rate. More precisely, at each step $k$, if the index $i_k$ is chosen randomly from the $\dkh{1,\ldots,m}$ with probability proportional to $\norm{\va_{i_k}}^2$ then for any initial $\vz_0$, the iteration $\vz_k$ given by randomized Karzmarz scheme \eqref{eq:linearkacz} obeys 
\[
\E \norm{\vz_k-\vx} \le \xkh{1-\frac{1}{k(A)\cdot n}}^{k/2} \cdot  \norm{\vz_0-\vx},
\]
where $k(A)$ is the condition number of $A$.

\subsubsection{Randomized Kaczmarz method for phase retrieval}
As stated before, the randomized Kaczmarz method for phase retrieval is proposed by Wei in 2015.
He was able to show numerically \cite{Wei2015} that the randomized Kaczmarz method exhibits empirical performance over other state-of-the-art phase retrieval algorithms, 
but lack of adequate theoretical performance guarantee.
Lately, in the real case, when the measurements $\va_j$ are drawn independently and uniformly from the unit sphere, 
several results have been established independently to guarantee the linear convergence of the randomized Kaczmarz method under appropriate initialization.

For instance, Tan and Vershynin \cite{tan2019phase} prove that for any $0<\delta, \delta_0 \le 1$ if $m\gtrsim n\log(m/n) +\log(1/\delta_0)$ and $\va_j \in \R^n$ are drawn independently and uniformly from the unit sphere,
 then with probability at least $1-\delta_0$ it holds:
the $k$-th step randomized Kaczmarz estimate $\vz_k$ given by \eqref{eq:updateformula} satisfies
\[
\E_{\mathcal{I}^k} \zkh{\dist(\vz_{k}, \vx) \1_{\tau=\infty}} \le (1-\frac{1}{2n})^{k/2} \dist(\vz_0, \vx),
\]
provided $\dist(\vz_0, \vx) \le c\delta \norm{\vx}$ for some constant $c>0$.
Furthermore, the probability $\PP(\tau<\infty) \le \delta^2$. Here $\tau$ is the stopping time and $\E_{\mathcal{I}^k}$ denotes the expectation with respect to randomness $\mathcal{I}^k:=\dkh{i_1,i_2,\ldots,i_k}$
conditioned on the high probability event of random measurements $\dkh{\va_j}_{j=1}^m$.

\subsection{Our Contributions}
As stated before, randomized Kaczmarz method is a popular and convenient method for solving phase retrieval problem due to its fast convergence and low computational complexity.
For the real setting, the theoretical guarantee of linear convergence has been established, however, there is no result concerning the rate of convergence in the complex setting.
Since there is an essential difference between the real setting and complex setting, the convergence of randomized Kaczmarz method in the complex setting has been  left as a conjecture \cite[Section 7.2]{tan2019phase}. 
The goal of this paper is to give a positive answer to this conjecture, as shown below.

\begin{theorem} \label{th:mainresult}
 Assume that the measurement vectors $\va_1,\ldots,\va_m \in \C^n$ are drawn independently and uniformly from the unit sphere $\CS_{\C}^{n-1}$.
 For any $0<\delta<1$, let $\vz_0$ be an initial estimate to $\vx$ such that $ \dist(\vz_0, \vx) \le 0.01\delta \norm{\vx}$.  There exist universal constants $C_0,c_0>0$ such that if $m\ge C_0 n$ then
with probability at least $1-14\exp(-c_0 n)$ it holds: the iteration $\vz_k$ given by randomized Kaczmarz update rule
\eqref{eq:updateformula} obeys
\[
\E_{\mathcal{I}^k} \zkh{\dist(\vz_{k}, \vx) \1_{\tau=\infty}} \le (1-0.03/n)^{k/2} \dist(\vz_0, \vx),
\]
where $\tau$ is the stopping time defined by
\begin{equation*} 
\tau:= \min\dkh{k: \vz_k \notin B} \quad \mbox{with} \quad B:=\dkh{\vz: \dist(\vz, \vx) \le 0.01 \norm{\vx}}.
\end{equation*}
Furthermore, the probability $\PP(\tau<\infty) \le \delta^2$. Here $\E_{\mathcal{I}^k}$ denotes the expectation with respect to randomness $\mathcal{I}^k:=\dkh{i_1,i_2,\ldots,i_k}$
conditioned on the high probability event of random measurements $\dkh{\va_j}_{j=1}^m$.
\end{theorem}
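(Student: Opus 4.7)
The plan is to pair a deterministic one-step identity with a \emph{Restricted Strong Convexity} (RSC) condition on the sample set, and then verify RSC for random measurements on the complex unit sphere. Let $\va := \va_{i_k}$, $\alpha := \va^*\vx$ and $\beta := \va^*\vh$ with $\vh := \vz_k - \vx$ (the global phase of $\vx$ chosen so that $\norm{\vh}=\dist(\vz_k,\vx)$). A direct manipulation of the update rule \eqref{eq:updateformula} produces the decomposition $\vz_{k+1} - \vx = \bigl(\vh - \tfrac{\beta}{\norm{\va}^2}\va\bigr) + \tfrac{|\alpha|u - \alpha}{\norm{\va}^2}\va$ with $u := \va^*\vz_k/|\va^*\vz_k|$. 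The two summands are orthogonal (one perpendicular to $\va$, one parallel), which gives the identity
\[
\norm{\vz_{k+1}-\vx}^2 \;=\; \norm{\vh}^2 \;-\; \frac{|\va^*\vh|^2}{\norm{\va}^2} \;+\; \frac{\bigl||\alpha|u-\alpha\bigr|^2}{\norm{\va}^2}.
\]
Writing $\alpha/|\alpha|=e^{i\theta}$ and $u=e^{i\phi}$, the third term equals $|\alpha|^2 \cdot 4\sin^2((\phi-\theta)/2)/\norm{\va}^2$. The signed-area relation $|\alpha|\,|\va^*\vz_k|\sin(\phi-\theta)=\mathrm{Im}(\bar\alpha\beta)$ together with $4\sin^2(\psi/2)\le 2\sin^2\psi$ (for $|\psi|\le \pi/2$) yields
\[
\bigl||\alpha|u - \alpha\bigr|^2 \;\le\; \frac{2\,|\mathrm{Im}(\bar\alpha\beta)|^2}{|\va^*\vz_k|^2},
\]
so the phase-mismatch term is controlled by $|\mathrm{Im}(\overline{\va^*\vx}\,\va^*\vh)|^2/|\va^*\vz_k|^2$. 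A complementary universal bound $||\alpha|u-\alpha|\le 2|\beta|$ follows from $|\alpha|u-\alpha=\beta-\gamma u$ with $|\gamma|=\bigl||\alpha+\beta|-|\alpha|\bigr|\le|\beta|$; this will handle the contribution of indices with small $|\va^*\vz_k|$.

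Next, I would formulate the RSC condition: there exist absolute constants $\mu,\nu>0$ with $\mu-\nu\ge 0.06$ such that, with high probability over $\{\va_j\}$, simultaneously for every $\vh$ and $\vz_k\in B$ with $\norm{\vh}\le 0.01\norm{\vx}$,
\[
\frac{1}{m}\sum_{j=1}^m |\va_j^*\vh|^2 \;\ge\; \frac{\mu}{n}\norm{\vh}^2, \qquad \frac{2}{m}\sum_{j=1}^m \frac{|\mathrm{Im}(\overline{\va_j^*\vx}\,\va_j^*\vh)|^2}{|\va_j^*\vz_k|^2}\;\le\; \frac{\nu}{n}\norm{\vh}^2.
\]
Since $\norm{\va_j}=1$ forces the Kaczmarz law on $i_k$ to be uniform on $\{1,\ldots,m\}$, substituting RSC into the one-step identity immediately gives the per-iteration contraction
\[
\E_{i_k}\!\left[\norm{\vz_{k+1}-\vx}^2\;\big|\;\vz_k\right] \;\le\; \Bigl(1 - \frac{0.03}{n}\Bigr)\norm{\vh}^2 \qquad \text{on } \{\vz_k\in B\}.
\]

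The hard part will be verifying RSC when the $\va_j$ are drawn uniformly from $\mathbb S^{n-1}_{\C}$. The lower bound is standard complex-Gaussian concentration. The upper bound is the heart of the complex-case argument: (i) the denominator $|\va_j^*\vz_k|^2$ is a random quantity that can be tiny for individual $j$, and (ii) the complex phase $u_j$ varies continuously over $\mathbb S^1$, which blocks the hyperplane-tessellation / VC-dimension strategy used for binary phases in the real case \cite{tan2019phase,jeong2017convergence}. I plan three stages: (a) compute the population mean of $|\mathrm{Im}(\overline{\va^*\vx}\va^*\vh)|^2/|\va^*\vz_k|^2$ for Gaussian $\va$, reducing by unitary invariance to an integral over the two-complex-dimensional subspace $\mathrm{span}_{\C}\{\vx,\vh,\vz_k\}$ and verifying it is at most $\nu\norm{\vh}^2/n$ with $\nu$ small; (b) split indices according to whether $|\va_j^*\vz_k|\le \kappa\norm{\vx}$: small-ball estimates bound both the population fraction of such indices ($O(\kappa^2)$) and their aggregate contribution via the deterministic $||\alpha|u-\alpha|^2\le 4|\va^*\vh|^2$ fallback; (c) upgrade from pointwise to uniform in $(\vh,\vz_k)$ by an $\varepsilon$-net on the $O(1)$-dimensional parameter subspace $\mathrm{span}_{\C}\{\vx\}+\mathrm{span}_{\C}\{\vh,\vz_k\}$, with Lipschitz estimates for the (truncated) integrand. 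Each piece should hold with failure probability $\exp(-c_0 n)$ provided $m \ge C_0 n$.

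Once RSC is in hand, the stopping-time conclusion is routine. Set $Y_k := \norm{\vz_k - \vx}^2\,\1_{\tau>k}$; on $\{\tau>k\}$ the per-step contraction applies (since $\vz_k \in B$), and on $\{\tau \le k\}$ one has $Y_k=0$. Therefore $(1-0.03/n)^{-k}Y_k$ is a supermartingale with respect to $\mathcal I^k$, yielding $\E_{\mathcal I^k}Y_k \le (1-0.03/n)^k\norm{\vz_0-\vx}^2$ and, via Jensen's inequality applied to $\sqrt{\,\cdot\,}$, the stated $\E_{\mathcal I^k}[\dist(\vz_k,\vx)\1_{\tau=\infty}]$ bound. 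An application of Ville's maximal inequality to the same supermartingale, combined with the initialization $\dist(\vz_0,\vx)\le 0.01\delta\norm{\vx}$ (a factor $\delta$ smaller than the radius of $B$), delivers $\PP(\tau<\infty)\le \delta^2$, completing the proof.
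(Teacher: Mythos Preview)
Your orthogonal one-step identity is correct, and the supermartingale endgame matches the paper. The gap is in the trigonometric step: the inequality $4\sin^2(\psi/2)\le 2\sin^2\psi$ is valid on $|\psi|\le\pi/2$, but it overestimates the left side by a factor of $2$ at leading order (both sides are $\sim\psi^2$ with ratio $\to 1/2$), and that factor is exactly what kills your RSC margin. Do the population calculation you propose in step~(a) in the worst direction $\vh\perp\vx$ with $\norm{\vh}\to 0$: writing $\alpha=\va^*\vx$, $\beta=\va^*\vh$, the phase of $\bar\alpha\beta$ is uniform by rotation invariance, so
\[
\E\!\left[\frac{\Im^2(\bar\alpha\beta)}{|\va^*\vx|^2}\right]=\E\!\left[\frac{\Re^2(\bar\alpha\beta)}{|\va^*\vx|^2}\right]=\tfrac12\,\E|\va^*\vh|^2.
\]
Since $|\va^*\vz_k|^2=|\va^*\vx|^2+O(\norm{\vh})$, the population value of your second RSC quantity tends to $\tfrac{1}{n}\norm{\vh}^2$, i.e.\ $\nu=1$; but $\mu=1$ as well, so $\mu-\nu=0$ and no contraction follows from your formulation.

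The paper sidesteps this by never invoking the sine bound. It packages the one-step decrement as $f(\vz_k)-D_{\vh}f(\vz_k)$ for the amplitude loss $f$ and, after exact algebra (its display~(4.1)), isolates the term $\sum_j \Re^2(\bar\alpha_j\beta_j)/|\va_j^*\vx|^2$ with a \emph{positive} coefficient, then lower-bounds its population mean by roughly $\tfrac38\norm{\vh}^2$ (Lemmas~6.2 and~6.4). In your variables this corresponds to using the sharp second-order expansion
\[
\bigl|\,|\alpha|u-\alpha\,\bigr|^2=\frac{\Im^2(\bar\alpha\beta)}{|\alpha|^2}+O\!\Bigl(\frac{|\beta|^3}{|\alpha|}\Bigr)
\]
rather than the bound $\le 2\,\Im^2(\bar\alpha\beta)/|\va^*\vz_k|^2$; the per-step net gain then becomes $|\beta|^2-\Im^2(\bar\alpha\beta)/|\alpha|^2=\Re^2(\bar\alpha\beta)/|\alpha|^2$, whose expectation is strictly positive. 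If you make this replacement and split indices on $|\va_j^*\vx|\ge\alpha|\va_j^*\vh|$ (so the cubic remainder is genuinely lower order), your program goes through along the paper's lines; but as written, step~(a) will return $\nu\ge\mu$ and the argument stalls.
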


The theorem asserts  that the randomized Kaczmarz method converges linearly to the global solution $\vx$ (up to a global phase) in expectation for random measurements $\va_j \in \C^n$ which are drawn independently and uniformly from the complex unit sphere, or equivalent are independent complex Gaussian random vectors, with an optimal sample complexity.

\begin{remark}
Theorem  \ref{th:mainresult} requires an initial estimate $\vz_0$ which is close to the target solution. 
In fact, a good initial estimate can be obtained easily by spectral initialization which is widely used in non-convex algorithms for phase retrieval.
For instance, when $\va_j \in \C^n$ are complex Gaussian random vectors,
Gao and Xu \cite{Gaoxu} develop a spectral method based on exponential function, and  prove that with probability at least $1-\exp(-cn)$ the
spectral initialization can give an initial guess $\vz_0$ satisfying $\dist(\vz_0, \vx) \le \epsilon \norm{\vx}$ for any fixed $\epsilon$, provided $m\ge Cn$ for a positive constant $C$.
 We refer the reader to \cite{TWF, TAF, RWF}  for others spectral initialization and \cite{luo2019optimal,Mondelli} for the optimal design of a spectral initialization.

\end{remark}

\subsection{Notations}
Throughout this paper, we assume the measurements $\va_j\in \C^n, \; j=1,\ldots,m $ are drawn independently and uniformly from the 
complex unit sphere.  We say $\xi \in \C^n$ is a complex Gaussian
random vector if $\xi \sim 1/\sqrt{2}\cdot \cN(0,I_n)+i/\sqrt{2}\cdot \cN(0,I_n)$.
We write $\vz \in \mathbb{S}_{\C}^{n-1}$ if $\vz \in \C^n$ and $\norm{\vz}=1$.
Let $\Re(z) \in \R$ and $\Im(z) \in \R$ denote the real and imaginary part of a complex number $z\in \C$.
 For any $A,B\in \R$, we use $ A \lesssim B$
to denote $A\le C_0 B$ where $C_0\in \R_+$ is an  absolute constant.  The notion
$\gtrsim$ can be defined similarly. 
 In this paper, we use  $C,c$ and the subscript
(superscript)   form of them to denote universal constants whose values vary with the
context.

\subsection{Organization}
The paper is organized as follows.
 In Section 2, we introduce some notations and definitions that will be used in our paper.  In particular, the restricted strong convexity condition plays a key role in the proof of main result.
In Section 3, we first show that under the restricted strong convexity condition, a convergence result for a single step can be established, and then we show the main result can be proved by using the tools from stochastic process. In Section 4, we demonstrate that the random measurements drawn independently and uniformly from the complex unit sphere satisfies restricted strong convexity condition with high probability.
A brief discussion is presented in Section 5.  Appendix collects the
technical lemmas needed in the proofs.

\section{Preliminaries}
The aim of this section is to introduce some definitions that will be used in our paper.
Let $\vx\in \C^n$ be the target signal we want to recover.
The measurements we obtain are
\begin{equation} \label{eq:measectime}
b_j=\abs{\nj{\va_j,\vx}},\quad j=1,\ldots,m,
\end{equation}
where $\va_j \in \C^n$ are measurement vectors.  In this paper, we assume without loss of generality that $\va_j \in \mathbb{S}_{\C}^{n-1}$ for all $j=1,\ldots,m$.
For the recovery of $\vx$ we consider the randomized Kaczmarz method given by 
\begin{equation}  \label{eq:kaczschem}
\vz_{k+1}=\vz_k-\xkh{1-\frac{b_{i_k}}{|\va_{i_k}^*\vz_k|}} \va_{i_k} \va_{i_k}^*\vz_k ,
\end{equation}
where $i_k$ is chosen uniformly from the $\dkh{1,\ldots,m}$ at random at the $k$-th iteration. 

Obviously, for any $\vz$ if  $\vz$ is a solution to \eqref{eq:measectime} then $\vz e^{i\phi}$ is also a solution to it for any $\phi\in \R$.
Thus the set of solutions to \eqref{eq:measectime}   is $\dkh{\vx e^{i\phi}: \phi \in \R}$, which is a one-dimensional circle in $\C^n$. For this reason, we define the distance between $\vz$ and $\vx$ as
\[
\dist(\vz,\vx)=\min_{\phi\in \R}\norm{\vz-\vx e^{i\phi}}.
\]
For convenience, we also define the phase $\phi(\vz)$ as
\begin{equation} \label{eq:defphi}
\phi(\vz):=\mbox{argmin}_{\phi \in \R} \norm{\vz-\vx e^{i\phi}}
\end{equation}
for any $\vz \in \C^n$. Moreover, for any $\epsilon \ge 0$ we define the $\epsilon$-neighborhood of $\vx$ as
\begin{equation} \label{eq:Eeps}
E(\epsilon):=\dkh{\vz\in \C^n: \dist(\vz,\vx) \le \epsilon}.
\end{equation}
The following  auxiliary loss function plays a key role in the proof of main result:
\begin{equation}\label{eq:loss}
f(\vz)=\frac{1}{m} \sum_{j=1}^m \xkh{\abs{\va_j^* \vz}-\abs{\va_j^* \vx}}^2.
\end{equation}
Since it is not differentiable, we shall need the directional derivative.
For any vector $\vv\neq 0$ in $\C^n$, the {\em one-sided directional derivative of $f$} at $\vz$ along the direction $\vv$ is given by
\[
       D_{\vv} f(\vz):=\lim_{t\to 0^+} \frac{f(\vz+t\vv)-f(\vz)}{t}
\]
if the limit exists. It is not difficult to compute that the one-sided directional derivative of $f$ in \eqref{eq:loss} along any direction $\vv$  is
\begin{equation} \label{eq:direcdiriva}
 D_{\vv} f(\vz)=\frac{2}{m} \sum_{j=1}^m \xkh{1-\frac{|\va_j^* \vx|}{|\va_j^* \vz|}} \Re(\va_j^* \vv \vz^* \va_i).
\end{equation}
Finally,  we need the assumption that $f$  satisfies a local restricted strong convexity on $E(\epsilon)$, which
essentially states that the function is well behaved along the line connecting the current point to its nearest global solution.
Here, $E(\epsilon)$ and $f$ are defined in \eqref{eq:Eeps} and \eqref{eq:loss}, respectively.

\begin{definition}[Restricted Strong Convexity]
The function $f$ is said to obey the restricted strong convexity RSC$(\gamma,\epsilon)$ for some $\gamma,\epsilon >0$ if
\[
 D_{\vz-\vx e^{i \phi(\vz)}} f(\vz) \ge \gamma \norm{\vz-\vx e^{i \phi(\vz)}}^2 + f(\vz)
\]
for all $\vz \in E(\epsilon)$. 
\end{definition}

\section{Proof of The Main Result}
In this section we present the detailed proof of the main result. We first prove that under the assumption of $f$ satisfying restricted strong convexity,
a bound for the expected decrement in distance to the solution set can be established for the randomized Kaczmarz scheme in a single step. 
Next, we show that for random measurements $\va_j \in \C^n, j=1,\ldots,m$ 
which are drawn independently and uniformly from the complex unit sphere,  the function $f$ defined in \eqref{eq:loss} 
satisfies the restricted strong convexity with high probability,  provided $m\ge Cn$ for some constant $C>0$. Finally, using the tools from
stochastic process, we could prove the randomized Kaczmarz method is linearly convergent in expectation, which concludes the proof of  the main result.

\begin{theorem} \label{eq:twoiteration}
 Assume that $f$ defined in \eqref{eq:loss}   satisfies  the restricted strong convexity RSC$(\gamma,\epsilon)$. 
 Then the iteration $\vz_{k+1}$ given by randomized Kaczmarz update rule \eqref{eq:kaczschem} obeys
\[
\E_{i_k} \zkh{\dist^2(\vz_{k+1}, \vx)} \le (1-\gamma) \dist^2(\vz_{k}, \vx)
\]
 for all $\vz_k$ satisfying $\dist(\vz_{k}, \vx) \le \epsilon \norm{\vx}$. Here, $\E_{i_k} $ denotes the expectation with respect to randomness of $i_k$ at iteration $k$.
\end{theorem}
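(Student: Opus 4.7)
\textbf{Proof plan for Theorem~\ref{eq:twoiteration}.}  The central idea is to avoid having to optimize the phase at step $k{+}1$: instead of comparing $\vz_{k+1}$ to its \emph{own} nearest phase-shifted copy of $\vx$, I will compare it to $\vx e^{i\phi_k}$ where $\phi_k:=\phi(\vz_k)$, using the trivial inequality
\[
\dist^2(\vz_{k+1},\vx)\le \|\vz_{k+1}-\vx e^{i\phi_k}\|^2.
\]
This replacement is lossless for the purposes of bounding the expected distance, and it lets me do an honest Euclidean expansion along a single fixed reference point.

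Next I will expand the square using the update rule~\eqref{eq:kaczschem}. Writing $\eta:=\va_{i_k}^*\vz_k$, $\omega:=\va_{i_k}^*\vx$, $\vh_k:=\vz_k-\vx e^{i\phi_k}$, and $\theta:=\arg\eta-\arg\omega-\phi_k$, the update can be decomposed into its part along $\va_{i_k}$ and its part in $\va_{i_k}^\perp$, using $\|\va_{i_k}\|=1$. The orthogonal component equals $(I-\va_{i_k}\va_{i_k}^*)\vh_k$ and is unchanged by the Kaczmarz step, while the $\va_{i_k}$-component equals $|\omega|e^{i\arg\eta}-\omega e^{i\phi_k}$. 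Putting these together and simplifying with $|\omega|^2|e^{i\arg\eta}-e^{i(\arg\omega+\phi_k)}|^2=2|\omega|^2(1-\cos\theta)$, a direct computation yields the pointwise (per-index) identity
\[
\|\vz_{k+1}-\vx e^{i\phi_k}\|^2-\|\vh_k\|^2
   = -\bigl(|\eta|-|\omega|\cos\theta\bigr)^2+|\omega|^2(1-\cos\theta)^2.
\]
Expanding the right side gives the convenient symmetric form $-|\eta|^2+|\omega|^2+2(|\eta|-|\omega|)|\omega|\cos\theta$ per sample $j$.

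Now I will take the expectation $\E_{i_k}$, which averages the right side uniformly over $j=1,\dots,m$. The key calculation is to match this average with the directional derivative expression~\eqref{eq:direcdiriva}: starting from the formula for $D_{\vh_k}f(\vz_k)$ with $\vh_k=\vz_k-\vx e^{i\phi_k}$, one has $\Re(\va_j^*\vh_k\,\vz_k^*\va_j)=|\eta_j|^2-|\eta_j||\omega_j|\cos\theta_j$, so that
\[
D_{\vh_k}f(\vz_k)-f(\vz_k)
=\frac{1}{m}\sum_{j=1}^m\Bigl[|\eta_j|^2-|\omega_j|^2-2(|\eta_j|-|\omega_j|)|\omega_j|\cos\theta_j\Bigr].
\]
This is precisely the negative of the averaged identity above, which gives the clean relation
$\|\vh_k\|^2-\E_{i_k}\|\vz_{k+1}-\vx e^{i\phi_k}\|^2 = D_{\vh_k}f(\vz_k)-f(\vz_k)$.

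Finally, since $\vz_k\in E(\epsilon\|\vx\|)$ by hypothesis, the RSC$(\gamma,\epsilon)$ condition applied at $\vz_k$ yields $D_{\vh_k}f(\vz_k)\ge \gamma\|\vh_k\|^2+f(\vz_k)$, so
\[
\E_{i_k}\dist^2(\vz_{k+1},\vx)\le \E_{i_k}\|\vz_{k+1}-\vx e^{i\phi_k}\|^2\le(1-\gamma)\|\vh_k\|^2=(1-\gamma)\dist^2(\vz_k,\vx),
\]
which is the claim. The main obstacle is the algebraic matching step: producing the identity $\|\vh_k\|^2-\E_{i_k}\|\vz_{k+1}-\vx e^{i\phi_k}\|^2 = D_{\vh_k}f(\vz_k)-f(\vz_k)$ requires the right parametrization via $(|\eta_j|,|\omega_j|,\theta_j)$ so that the quadratic-looking decrement collapses to the linear expression coming from the one-sided directional derivative; once this identity is visible, the RSC hypothesis delivers the contraction essentially for free.
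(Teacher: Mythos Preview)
Your proposal is correct and follows essentially the same route as the paper: bound $\dist^2(\vz_{k+1},\vx)$ by $\|\vz_{k+1}-\vx e^{i\phi(\vz_k)}\|^2$, expand the square using the update rule with $\|\va_{i_k}\|=1$, identify the averaged result as $\|\vh_k\|^2+f(\vz_k)-D_{\vh_k}f(\vz_k)$, and apply RSC. The only cosmetic difference is that you pass through the polar parametrization $(|\eta_j|,|\omega_j|,\theta_j)$ and an orthogonal decomposition along $\va_{i_k}$, whereas the paper expands the squared norm directly and reads off $f(\vz_k)$ and $D_{\vh_k}f(\vz_k)$ from the cross terms; both computations yield the same identity $\E_{i_k}\|\vz_{k+1}-\vx e^{i\phi_k}\|^2=\|\vh_k\|^2+f(\vz_k)-D_{\vh_k}f(\vz_k)$.
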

\begin{proof}
Recognize that $\norm{\va_{i_k}}=1$. 
Using the restricted strong convexity condition RSC$(\gamma,\epsilon)$, we have
\begin{eqnarray*}
\E_{i_k}\dist^2(\vz_{k+1}, \vx) &=& \E_{i_k}\norm{\vz_{k+1}-\vx e^{i \phi(\vz_{k+1})}}^2 \\
& \le & \E_{i_k} \left\| \vz_{k}-\vx e^{i \phi(\vz_{k})} - \Big(1-\frac{y_{i_k}}{|\va_{i_k}^*\vx_k|}\Big) \va_{i_k}^*\vz_k \va_{i_k}  \right\|_2^2  \\
&=& \norm{\vz_{k}-\vx e^{i \phi(\vz_{k})} }^2+ \E_{i_k}  \Big(1-\frac{y_{i_k}}{|\va_{i_k}^*\vz_k|}\Big)^2 \abs{\va_{i_k}^*\vz_k}^2\\ 
&&  - 2 \E_{i_k} \Re\xkh{ \Big(1-\frac{y_{i_k}}{|\va_{i_k}^*\vz_k|}\Big) \vz_k^*  \va_{i_k} \va_{i_k}^* (\vz_{k}-\vx e^{i \phi(\vz_{k})} ) } \\
&=&  \norm{\vz_{k}-\vx e^{i \phi(\vz_{k})} }^2 +f(\vz_k) - D_{\vz_k-\vx e^{i \phi(\vz_k)}} f(\vz_k) \\
&\le & (1-\gamma) \norm{\vz_{k}-\vx e^{i \phi(\vz_{k})} }^2,
\end{eqnarray*}
where the third equation follows from the expression of directional derivative as shown in \eqref{eq:direcdiriva}.
This completes the proof.
\end{proof}

\begin{theorem} \label{th:RSCcond}
Assume the measurement vectors $\va_1,\ldots,\va_m \in \C^n$ are drawn uniformly from the unit sphere $\CS_{\C}^{n-1}$. 
Suppose that  $m\ge C_0 n$ and $f$ is defined in \eqref{eq:loss}. Then $f$ satisfies the restricted strong convexity RSC$(\frac {0.03}{n},0.01)$
with probability at least $1-14\exp(-c_0 n)$, where $C_0, c_0$ are universal positive constants.
\end{theorem}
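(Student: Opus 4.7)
The plan is to reduce the RSC inequality to an empirical concentration statement for functionals of the $\va_j$'s and then uniformize over $\vz \in E(0.01)$ via a net-plus-VC argument. Normalizing $\|\vx\|=1$ by homogeneity, set $\vh := \vz - \vx e^{i\phi(\vz)}$, $s_j := |\va_j^*\vz|$, $t_j := |\va_j^*\vx|$, and $\theta_j := \arg(\va_j^*\vx e^{i\phi(\vz)})-\arg(\va_j^*\vz)$. A direct expansion of \eqref{eq:direcdiriva} together with the elementary identity $|u-v|^2 = (|u|-|v|)^2+2|u||v|(1-\cos\theta)$ (applied with $u=\va_j^*\vz$, $v=\va_j^*\vx e^{i\phi(\vz)}$) collapses the left-hand side of the desired RSC inequality into the clean form
\[
D_\vh f(\vz) - f(\vz) \;=\; \frac{1}{m}\sum_{j=1}^m\bigl[|\va_j^*\vh|^2 - 2t_j^2(1-\cos\theta_j)\bigr].
\]
The problem thus reduces to showing that this empirical average exceeds $(0.03/n)\|\vh\|^2$ uniformly for $\|\vh\|\le 0.01$.

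At a fixed $\vz$, I would estimate the two summands separately. Since $\va_j\sim\mathrm{Unif}(\mathbb{S}_\C^{n-1})$ has $\E[\va_j\va_j^*]=I/n$, standard sample-covariance concentration gives $\frac{1}{m}\sum_j|\va_j^*\vh|^2 \ge (1-\eta)\|\vh\|^2/n$ uniformly in $\vh$ with probability $1-e^{-cn}$, once $m\gtrsim n/\eta^2$. For the phase-cost term, the same identity rewrites
\[
2t_j^2(1-\cos\theta_j) \;=\; \frac{t_j}{s_j}\bigl[|\va_j^*\vh|^2 - (s_j-t_j)^2\bigr],
\]
so I would split the indices into a ``good'' set $G := \{j: (1-\tau)t_j \le s_j \le (1+\tau)t_j\}$ and its complement $B$ for a small constant $\tau$. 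On $G$ the prefactor $t_j/s_j$ equals $1\pm O(\tau)$ and the positive correction $(s_j-t_j)^2$ contributes an aggregate of order $\|\vh\|^2/n$ via the local strong convexity of the amplitude loss $f$ itself. On $B$ the reverse triangle inequality forces either $t_j \lesssim |\va_j^*\vh|/\tau$ or $|s_j-t_j|$ comparable to $|\va_j^*\vh|$, and even the trivial $1-\cos\theta_j\le 2$ then controls the bad-index contribution by $O(|\va_j^*\vh|^2/\tau^2)$; small-ball anti-concentration of $|\va_j^*\vz|$ keeps $|B|/m$ a small constant. Balancing $\tau$ and $\eta$ produces the residual margin $0.03\|\vh\|^2/n$.

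The main obstacle is lifting these $\vz$-dependent pointwise estimates to a bound uniform on $E(0.01)$: the partition $G/B$, the phases $\theta_j$, and $\vh$ itself all vary with $\vz$, and the integrand is not Lipschitz near $\{\va_j^*\vz = 0\}$. My plan is to combine (i) an $\epsilon$-net on $E(0.01)$ of log-cardinality $\lesssim n$ (the set sits in a ball of $\C^n = \R^{2n}$), together with Lipschitz continuity of the smooth portions of the integrand; (ii) a VC-dimension bound, in the spirit of \cite{tan2019phase}, on the class of halfspace indicators $\{\vz:|\va_j^*\vz|\le(1-\tau)|\va_j^*\vx|\}$ viewed as halfspaces in $\R^{2n}$ (VC dimension $O(n)$), which gives uniform control of $|B|/m$; and (iii) a restricted concentration estimate of the form $\sum_{j\in B}|\va_j^*\vh|^2 \lesssim (|B|/m)\,\|\vh\|^2$ valid uniformly over the VC class of index subsets. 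The essential new ingredient relative to the real-valued argument of \cite{tan2019phase} is that $\theta_j$ ranges continuously over $[0,2\pi)$ rather than being binary; this difficulty is absorbed once and for all in the identity in the first paragraph, which is already smooth in $\theta_j$ and handles the phase cost quadratically. A union bound over the resulting handful of failure events yields the stated probability $1-14\exp(-c_0 n)$.
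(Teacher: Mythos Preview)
Your identity
\[
D_{\vh} f(\vz) - f(\vz) \;=\; \frac{1}{m}\sum_{j=1}^m\bigl[|\va_j^*\vh|^2 - 2t_j^2(1-\cos\theta_j)\bigr]
\]
is correct, and the good/bad split is close in spirit to the paper's partition $I_\alpha=\{j:|\va_j^*\vx|\ge\alpha|\va_j^*\vh|\}$ (your $B$ is contained in $I_{1/\tau}^c$). But there is a genuine gap at the step you flag as routine. On $G$ your rewriting gives
\[
|\va_j^*\vh|^2 - 2t_j^2(1-\cos\theta_j)\;=\;\Bigl(1-\tfrac{t_j}{s_j}\Bigr)|\va_j^*\vh|^2 + \tfrac{t_j}{s_j}(s_j-t_j)^2,
\]
and the first summand can be as negative as $-\frac{\tau}{1-\tau}|\va_j^*\vh|^2$. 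So the \emph{entire} positive margin must come from a uniform lower bound
\[
\frac{1}{m}\sum_{j\in G}(s_j-t_j)^2 \;\ge\; c\,\frac{\|\vh\|^2}{n}.
\]
You justify this by invoking ``the local strong convexity of the amplitude loss $f$ itself,'' but that is precisely the statement $f(\vz)\ge c\,\dist^2(\vz,\vx)/n$, which is not available a priori and is essentially as hard as the theorem you are proving. This is not a bookkeeping step: to leading order $(s_j-t_j)^2\approx \Re^2(\vh^*\va_j\va_j^*\vx)/|\va_j^*\vx|^2$, and lower–bounding the empirical mean of this ratio uniformly over $\vh$ with $\Im(\vh^*\vx)=0$ is exactly where the paper concentrates its effort (Lemmas~\ref{le:hxratiox} and~\ref{le:hxratioxexpec}), via an explicit polar-coordinate expectation computation that crucially uses the orthogonality constraint $\Im(\vh^*\vx)=0$. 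Without an argument at this level, the balance you sketch does not close.

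A second, smaller issue is step (iii). The estimate $\sum_{j\in B}|\va_j^*\vh|^2 \lesssim (|B|/m)\|\vh\|^2$ is not true uniformly over the VC class you describe: $B$ is biased toward indices with large $|\va_j^*\vh|$ (since $B\subset\{t_j<|\va_j^*\vh|/\tau\}$), so the sum does not factor through $|B|/m$. The paper handles the analogous term not by VC but by replacing the indicator $\1_{\{|\va_j^*\vx|\le\lambda|\va_j^*\vh|\}}$ with a Lipschitz surrogate, applying Bernstein pointwise, and passing to an $\varepsilon$-net; the resulting bound (Lemma~\ref{le:ah2complem}) is $O(\lambda^2/n)$ in the effective parameter $\lambda=\|\vh\|/\tau$, which is what makes the numerics work with $\tau\approx 1/12$ and $\|\vh\|\le 0.01$.
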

\begin{proof}
The proof of this theorem is deferred to Section \ref{sec:Rsc}.
\end{proof}

Based on Theorem \ref{eq:twoiteration} and Theorem \ref{th:RSCcond}, we obtain that if $m\ge C_0n$ for some universal constant $C_0>0$,
then with probability at least $1-14\exp(-c_0 n)$, the $(k+1)$-th iteration obeys
\[
\E_{i_{k+1}} \zkh{\dist^2(\vz_{k+1}, \vx)} \le (1-0.03/n) ~\dist^2(\vz_{k}, \vx),
\] 
provided $\dist(\vz_{k}, \vx) \le 0.01 \norm{\vx}$ at $k$ step.
To be able to iterate this result recursively we need  the condition $\dist(\vz_{k}, \vx) \le 0.01 \norm{\vx}$ holds for all $k$,
however, it does not hold arbitrarily. Hence, we introduce a stopping time 
\begin{equation} \label{eq:stoppingtime}
\tau:= \min\dkh{k: \vz_k \notin B},
\end{equation}
where $B:=\dkh{\vz: \dist(\vz, \vx) \le 0.01 \norm{\vx}}$. With this in place, we can give the proof of Theorem \ref{th:mainresult}.
We restate our main result here for convenience.

\begin{theorem}
Suppose $m\ge C_0n$ for some universal constant $C_0>0$.
 Assume the measurement vectors $\va_1,\ldots,\va_m \in \C^n$ are drawn independently and uniformly from the unit sphere $\CS_{\C}^{n-1}$.
 For any $0<\delta<1$, let $\vz_0$ be an initial estimate to $\vx$ such that $\norm{\vz_0-\vx} \le 0.01\delta \norm{\vx}$.  Let $\tau$ be the stopping time
defined in \eqref{eq:stoppingtime}. Then with probability at least $1-14\exp(-c_0 n)$ it holds: the iteration $\vz_k$ given by randomized Kaczmarz update rule \eqref{eq:kaczschem} obeys
\[
\E_{\mathcal{I}^k} \zkh{\dist(\vz_{k}, \vx) \1_{\tau=\infty}} \le (1-0.03/n)^{k/2} \dist(\vz_0, \vx).
\]
Furthermore, the probability $\PP(\tau<\infty) \le \delta^2$. Here $\E_{\mathcal{I}^k}$ denotes the expectation with respect to randomness $\mathcal{I}^k:=\dkh{i_1,i_2,\ldots,i_k}$
conditioned on the high probability event of random measurements $\dkh{\va_j}_{j=1}^m$ and $c_0>0$ is a universal constant.
\end{theorem}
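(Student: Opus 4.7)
The plan is to condition on the high-probability event guaranteed by Theorem \ref{th:RSCcond} and then run a stopped supermartingale argument that simultaneously delivers the linear decay on $\{\tau=\infty\}$ and the tail bound $\PP(\tau<\infty)\le \delta^2$. Throughout, write $X_k := \dist^2(\vz_k,\vx)$ and let $\mathcal{F}_k := \sigma(i_1,\dots,i_k)$ denote the natural filtration; the stopping time $\tau$ from \eqref{eq:stoppingtime} is $\mathcal{F}_k$-adapted. By Theorem \ref{th:RSCcond}, with probability at least $1-14\exp(-c_0 n)$ over the draw of the measurement vectors, $f$ satisfies RSC$(0.03/n,0.01)$; we work on this event for the rest of the argument.

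First I would define the scaled stopped process
\[
M_k := (1-0.03/n)^{-(k\wedge\tau)}\, X_{k\wedge\tau}, \qquad k\ge 0,
\]
and check that $M_k$ is a nonnegative supermartingale. On $\{\tau\le k\}$ the process is frozen, so $\E[M_{k+1}\mid\mathcal{F}_k]=M_k$ trivially. On $\{\tau>k\}$ we have $\vz_k\in B$, so $\dist(\vz_k,\vx)\le 0.01\norm{\vx}$ and Theorem \ref{eq:twoiteration} applies to give $\E_{i_{k+1}}[X_{k+1}\mid\mathcal{F}_k]\le(1-0.03/n)X_k$; multiplying by $(1-0.03/n)^{-(k+1)}$ yields $\E[M_{k+1}\mid\mathcal{F}_k]\le M_k$ on this event as well. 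Iterating the supermartingale inequality from $k=0$ gives $\E[M_k]\le M_0=X_0$ for every $k$.

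Next, I would extract the main bound on $\{\tau=\infty\}$: since $M_k=(1-0.03/n)^{-k}X_k$ on $\{\tau=\infty\}$ and $M_k\ge 0$ everywhere,
\[
\E\!\left[(1-0.03/n)^{-k} X_k \,\1_{\tau=\infty}\right]\le \E[M_k]\le X_0 = \dist^2(\vz_0,\vx),
\]
so $\E[X_k\,\1_{\tau=\infty}]\le(1-0.03/n)^k\dist^2(\vz_0,\vx)$. Applying Jensen's inequality for the concave function $\sqrt{\cdot}$ (equivalently Cauchy--Schwarz, using $\1_{\tau=\infty}^2=\1_{\tau=\infty}$ and $\PP(\tau=\infty)\le 1$) converts this to the claimed first-moment bound
\[
\E_{\mathcal{I}^k}\!\left[\dist(\vz_k,\vx)\,\1_{\tau=\infty}\right]\le (1-0.03/n)^{k/2}\dist(\vz_0,\vx).
\]

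For the second assertion $\PP(\tau<\infty)\le\delta^2$, I would use the (cruder) unscaled stopped process $X_{k\wedge\tau}$, which is also a nonnegative supermartingale by the same calculation (dropping the $(1-0.03/n)^{-k}$ factor only strengthens the inequality). Thus $\E[X_{k\wedge\tau}]\le X_0\le(0.01\delta\norm{\vx})^2$. On $\{\tau\le k\}$, $\vz_\tau\notin B$, so $X_{k\wedge\tau}=X_\tau>(0.01\norm{\vx})^2$. Markov's inequality then gives
\[
(0.01\norm{\vx})^2\,\PP(\tau\le k)\le \E[X_{k\wedge\tau}\,\1_{\tau\le k}]\le (0.01\delta\norm{\vx})^2,
\]
so $\PP(\tau\le k)\le\delta^2$ for every $k$, and sending $k\to\infty$ yields $\PP(\tau<\infty)\le\delta^2$. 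The conceptually substantive ingredients are Theorems \ref{eq:twoiteration} and \ref{th:RSCcond}; the only real design choice here is picking the scaled stopped process $M_k$ so that a single supermartingale argument yields both the contraction rate inside $B$ and, in its unscaled form, the escape probability bound.
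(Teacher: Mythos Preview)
Your proposal is correct and follows essentially the same route as the paper: condition on the RSC event from Theorem~\ref{th:RSCcond}, use the one-step contraction from Theorem~\ref{eq:twoiteration} to drive a stopped supermartingale argument, and extract the escape probability via a maximal/Markov-type inequality on the unscaled stopped process. Your packaging is arguably cleaner---you unify both parts through the single scaled supermartingale $M_k$ and make the Jensen step explicit---whereas the paper does the first part by direct induction on $\E[\dist^2(\vz_k,\vx)\1_{\tau>k}]$ and defers the second part to the supermartingale maximal inequality as in Tan--Vershynin, but the substance is the same.
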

\begin{proof}
From Theorem \ref{eq:twoiteration} and Theorem \ref{th:RSCcond}, we obtain that 
if $m\ge C_0n$ then with probability at least $1-14\exp(-c_0 n)$ it holds
\begin{eqnarray*}
\E_{\mathcal{I}^{k+1}} \big[\dist^2(\vz_{k+1}, \vx) \1_{\tau >k+1} ~\big |~ \vz_k \in B \big] &\le &\E_{\mathcal{I}^{k+1}}\big[\dist^2(\vz_{k+1}, \vx) \1_{\tau >k} ~\big |~ \vz_k \in B \big] \\
&=& \E_{\mathcal{I}^{k+1}} \big[\dist^2(\vz_{k+1}, \vx)  ~\big |~ \vz_k \in B \big]\1_{\tau >k}\\
&\le &  (1-0.03/n) ~\dist^2(\vz_{k}, \vx) \1_{\tau >k}.
\end{eqnarray*}
Note that $\vz_k \in B$ is an event with respect to randomness $\mathcal{I}^{k}$. Taking expectation gives
\begin{eqnarray*}
\E_{\mathcal{I}^{k+1}}\zkh{\dist^2(\vz_{k+1}, \vx) \1_{\tau >k+1}}& =& \E_{\mathcal{I}^{k}} \zkh{\E_{i_{k+1}} \big[\dist^2(\vz_{k+1}, \vx) \1_{\tau >k+1} ~\big |~ \vz_k \in B \big]} \\
&\le &  (1-0.03/n) ~\E_{\mathcal{I}^{k}} \zkh{\dist^2(\vz_{k}, \vx) \1_{\tau >k}} . 
\end{eqnarray*}
By induction, we arrive at the first part of the conclusion.

For the second part, define $Y_k:= \norm{ \vz_{k \wedge \tau}-\vx}^2$. Using the similar idea of Theorem 3.1 in \cite{tan2019phase}, we can check that $Y_k$ is a non-negative supermartingale.
It then follows from the supermartingale maximum inequality that
\[
\PP\xkh{\sup_{1\le k<\infty} Y_k\ge 0.01^2 \norm{\vx}^2} \le \frac{\E Y_0}{0.01^2 \norm{\vx}^2} \le \delta^2.
\]
This completes the proof.
\end{proof}

\section{Proof of Theorem \ref{th:RSCcond}} \label{sec:Rsc}
\begin{proof}[Proof of Theorem \ref{th:RSCcond}]
For any $\vz\in \C^n$, set $\vh=e^{-i \phi(\vz)} \vz -\vx$ where $\phi(\vz)$ is defined in \eqref{eq:defphi}. 
It is easy to check that the function $f$ given in \eqref{eq:loss} can be rewritten as
\[
f(\vz)=\frac{1}{m} \sum_{j=1}^m \xkh{\abs{\va_j^* \vh}^2+2\Re(\vh^* \va_j \va_j^* \vx ) + 2\abs{\va_j^* \vx}^2-2\abs{\va_j^* \vz}\abs{\va_j^* \vx}  }.
\]
To show that the function $f$ satisfies the restricted strong convexity, from the definition, it suffices to  give a lower bound for  $D_{\vz-\vx e^{i \phi(\vz)}} f(\vz) -f(\vz)$.
Note that 
$$|\va_j^* \vz|^2=|\va_j^* \vx|^2+2\Re(\vh^* \va_j \va_j^* \vx )+|\va_j^* \vh|^2.$$
 By some algebraic computation, we immediately have
\begin{eqnarray}
 && D_{\vz-\vx e^{i \phi(\vz)}} f(\vz) -f(\vz) \nonumber \\
 &=& \frac{2}{m} \sum_{j=1}^m \xkh{1-\frac{|\va_j^* \vx|}{|\va_j^* \vz|}} \xkh{ \abs{\va_j^* \vh}^2+\Re(\vh^* \va_j \va_j^* \vx )}-\frac{1}{m} \sum_{j=1}^m \xkh{\abs{\va_j^* \vz}-\abs{\va_j^* \vx}}^2 \nonumber\\
 &=&  \frac{1}{m} \sum_{j=1}^m \abs{\va_j^* \vh}^2+ \frac{2}{m} \sum_{j=1}^m \xkh{\abs{\va_j^* \vz}\abs{\va_j^* \vx}- \abs{\va_j^* \vx}^2-\frac{|\va_j^* \vx| |\va_j^* \vh|^2}{|\va_j^* \vz|} - \frac{|\va_j^* \vx| \Re(\vh^* \va_j \va_j^* \vx ) }{|\va_j^* \vz|} } \nonumber \\
 &=&  \frac{1}{m} \sum_{j=1}^m \abs{\va_j^* \vh}^2+ \frac{2}{m} \sum_{j=1}^m \frac{ |\va_j^* \vx|^3-|\va_j^* \vz| |\va_j^* \vx|^2+ |\va_j^* \vx| \Re(\vh^* \va_j \va_j^* \vx )}{|\va_j^* \vz|}\nonumber \\
  &=&  \frac{1}{m} \sum_{j=1}^m \abs{\va_j^* \vh}^2+ \frac{2}{m} \sum_{j=1}^m \frac{|\va_j^* \vz| |\va_j^* \vx| \Re(\vh^* \va_j \va_j^* \vx )- |\va_j^* \vx|^2 \Re(\vh^* \va_j \va_j^* \vx )-|\va_j^* \vx|^2 |\va_j^* \vh|^2}{|\va_j^* \vz|(|\va_j^* \vz|+|\va_j^* \vx|)}\nonumber\\
    &=&  \frac{1}{m} \sum_{j=1}^m \abs{\va_j^* \vh}^2- \frac{2}{m} \sum_{j=1}^m \frac{|\va_j^* \vx|^2 |\va_j^* \vh|^2}{|\va_j^* \vz|(|\va_j^* \vz|+|\va_j^* \vx|)}+ \frac{2}{m} \sum_{j=1}^m \frac{|\va_j^* \vx| |\va_j^* \vh|^2 \Re(\vh^* \va_j \va_j^* \vx )}{|\va_j^* \vz|(|\va_j^* \vz|+|\va_j^* \vx|)^2} \nonumber \\
    &&+ \frac{4}{m} \sum_{j=1}^m \frac{ |\va_j^* \vx| \Re^2(\vh^* \va_j \va_j^* \vx )}{|\va_j^* \vz|(|\va_j^* \vz|+|\va_j^* \vx|)^2} \label{eq:exprestrc}.
\end{eqnarray}
We next divide the indexes into two groups: $j\in I_{\alpha}$ and $j \in I_{\alpha}^c$, where  $I_{\alpha}:=\dkh{j: |\va_j^* \vx| \ge \alpha |\va_j^* \vh|}$ for some fixed parameter $\alpha>0$.  
For convenience, we denote $D_{\vz-\vx e^{i \phi(\vz)}} f(\vz) -f(\vz):= \frac{1}{m} \sum_{j=1}^m T_j$.
We claim that for any $\alpha>1$ it holds
\begin{equation}  \label{eq:Tjin}
 T_j \ge  \frac{4\alpha^3}{(\alpha+1)(2\alpha+1)^2}\cdot  \frac{ \Re^2(\vh^* \va_j \va_j^* \vx )}{|\va_j^* \vx|^2}-\frac{8\alpha^2-5\alpha+1}{(\alpha-1)(2\alpha-1)^2}\cdot   |\va_j^* \vh|^2 \quad \mbox{for } j\in I_{\alpha} 
\end{equation}
and 
\begin{equation} \label{eq:Tjout}
 T_j \ge - 3\abs{\va_j^* \vh}^2 \quad \mbox{for } j\in I_{\alpha}^c .
\end{equation}
This taken collectively with the identity \eqref{eq:exprestrc} leads to a lower estimate
\begin{eqnarray}
 D_{\vz-\vx e^{i \phi(\vz)}} f(\vz) -f(\vz) &\ge&  \frac{4\alpha^3}{(\alpha+1)(2\alpha+1)^2}\cdot  \frac{1}{m} \sum_{j \in I_{\alpha}} \frac{ \Re^2(\vh^* \va_j \va_j^* \vx )}{|\va_j^* \vx|^2} \nonumber\\
 &&-\frac{8\alpha^2-5\alpha+1}{(\alpha-1)(2\alpha-1)^2}\cdot  \frac{1}{m} \sum_{j \in I_{\alpha}} |\va_j^* \vh|^2-   \frac{3}{m} \sum_{j \in I_{\alpha}^c} \abs{\va_j^* \vh}^2, \label{eq:Dzxfz}
\end{eqnarray}
leaving us with three quantities in the right-hand side to deal with.
Let $\rho:=\norm{\vh}$. From the definition of $\vh$, it is easy to check $\Im(\vh^* \vx)=0$.
According to Lemma \ref{le:hxratiox}, we immediately obtain that for any $0<\delta\le 1$ there exist universal constants $C,c>0$ such that 
 if $\alpha\rho \le 1/3$ and $m\ge C\delta^{-2} \log(1/\delta)   n$ then with probability at least $1-6\exp(-c \delta^2 n)$ it holds
\begin{equation} \label{eq:lowerhxovex2}
 \frac{1}{m} \sum_{j \in I_{\alpha}} \frac{ \Re^2(\vh^* \va_j \va_j^* \vx )}{|\va_j^* \vx|^2} \ge \frac 1n \cdot \xkh{\frac 3{8} - \frac {\alpha^2 \rho^2}{(0.99+\alpha \rho)^2} -\delta} \norm{\vh}^2.
\end{equation}
For the second term, it follows from Lemma \ref{le:ah2} that for $m\ge C\delta^{-2} n$, with probability at least $1-2\exp(-c\delta^2 n)$,
\begin{equation} \label{eq:lowerah2}
\frac{1}{m} \sum_{j=1}^m |\va_j^* \vh|^2 \le \frac{1+\delta}{n} \norm{\vh}^2.
\end{equation}
Finally, for the third term, applying Lemma \ref{le:ah2complem}, we have that when $m\ge C\delta^{-2} \log(1/\delta)   n$ and  $0<\alpha \rho \le 0.4$, with probability at least $1-6\exp(-c\delta^2 n)$,
\begin{equation} \label{eq:upperah2com}
\frac{1}{m} \sum_{j \in I_{\alpha}^c} \abs{\va_j^* \vh}^2 \le \frac 1n \cdot \xkh{\frac {2\alpha^2 \rho^2}{0.99+\alpha^2 \rho^2} +\delta} \norm{\vh}^2.
\end{equation}
Setting $\alpha:=12$,  $\delta:=0.001$ and putting \eqref{eq:lowerhxovex2}, \eqref{eq:lowerah2}, \eqref{eq:upperah2com} into \eqref{eq:Dzxfz}, we obtain the conclusion that
with probability at least $1-14\exp(-c_0 n)$ it holds
\[
 D_{\vz-\vx e^{i \phi(\vz)}} f(\vz) -f(\vz)  \ge \frac{0.03}{n} \norm{\vh}^2 \quad \mbox{for all}\quad  \norm{\vh} \le 0.01,
\]
 provided $m\ge C_0 n$. Here,  $C_0,c_0$ are universal positive constants.

It remains to prove the claims. We first consider the case where $j\in I_{\alpha}$. 
It follows from  \eqref{eq:exprestrc} that 
\[
T_j=  \abs{\va_j^* \vh}^2- \frac{2|\va_j^* \vx|^2 |\va_j^* \vh|^2}{|\va_j^* \vz|(|\va_j^* \vz|+|\va_j^* \vx|)}+  \frac{2|\va_j^* \vx| |\va_j^* \vh|^2 \Re(\vh^* \va_j \va_j^* \vx )}{|\va_j^* \vz|(|\va_j^* \vz|+|\va_j^* \vx|)^2} +  \frac{ 4|\va_j^* \vx| \Re^2(\vh^* \va_j \va_j^* \vx )}{|\va_j^* \vz|(|\va_j^* \vz|+|\va_j^* \vx|)^2}.
\]
From the definition of $I_{\alpha}$,  it is easy to see that when $j\in I_{\alpha}$ we have
\begin{equation} \label{eq:lowupperofI}
(1-1/\alpha) |\va_j^* \vx| \le |\va_j^* \vx|-|\va_j^* \vh|  \le  |\va_j^* \vz|\le  |\va_j^* \vx|+|\va_j^* \vh|   \le  (1+1/\alpha) |\va_j^* \vx|.
\end{equation}
Thus  the second term of $T_j$ obeys
\[
\frac{|\va_j^* \vx|^2 |\va_j^* \vh|^2 }{|\va_j^* \vz|(|\va_j^* \vz|+|\va_j^* \vx|)} \le \frac{\alpha^2}{(\alpha-1)(2\alpha-1)}|\va_j^* \vh|^2.
\]
Similarly, the third term of $T_j$ satisfies
\[
 \frac{|\va_j^* \vx| |\va_j^* \vh|^2 |\Re(\vh^* \va_j \va_j^* \vx )|}{|\va_j^* \vz|(|\va_j^* \vz|+|\va_j^* \vx|)^2}\le  \frac{|\va_j^* \vx|^2 |\va_j^* \vh|^3}{|\va_j^* \vz|(|\va_j^* \vz|+|\va_j^* \vx|)^2}\le  \frac{\alpha^2}{(\alpha-1)(2\alpha-1)^2}|\va_j^* \vh|^2.
\]
Finally, using the upper bound in \eqref{eq:lowupperofI}, we have
\begin{eqnarray*}
\frac{|\va_j^* \vx| \Re^2(\vh^* \va_j \va_j^* \vx )}{|\va_j^* \vz|(|\va_j^* \vz|+|\va_j^* \vx|)^2}& = & \frac{|\va_j^* \vx|^3}{|\va_j^* \vz|(|\va_j^* \vz|+|\va_j^* \vx|)^2} \cdot \frac{ \Re^2(\vh^* \va_j \va_j^* \vx )}{|\va_j^* \vx|^2}\\
&\ge& \frac{\alpha^3}{(\alpha+1)(2\alpha+1)^2}\cdot \frac{ \Re^2(\vh^* \va_j \va_j^* \vx )}{|\va_j^* \vx|^2}.
\end{eqnarray*}
Collecting the above three estimators, we have
\begin{eqnarray*} 
T_j &\ge&  \frac{4\alpha^3}{(\alpha+1)(2\alpha+1)^2}\cdot  \frac{ \Re^2(\vh^* \va_j \va_j^* \vx )}{|\va_j^* \vx|^2}-\frac{8\alpha^2-5\alpha+1}{(\alpha-1)(2\alpha-1)^2}\cdot   |\va_j^* \vh|^2,
\end{eqnarray*}
which proves the claim \eqref{eq:Tjin}.

We next turn to consider the case where $j\notin I_{\alpha}$. From the definition, we know $T_j$ can be denoted as
\[
T_j= 2 \xkh{1-\frac{|\va_j^* \vx|}{|\va_j^* \vz|}}  \Re( e^{-i \phi } \vh^* \va_j \va_j^* \vz ) - \xkh{|\va_j^* \vz|-|\va_j^* \vx|}^2  \quad \mbox{for all} \quad j.
\]
It then immediately  gives 
\begin{eqnarray*}
|T_j|  \le  \frac{ 2 \big| |\va_j^* \vz|-|\va_j^* \vx| \big | }{|\va_j^* \vz| }\cdot  |\vh^* \va_j \va_j^* \vz | +\xkh{|\va_j^* \vz|-|\va_j^* \vx|}^2 \le 3\abs{\va_j^* \vh}^2,
\end{eqnarray*}
where we use the Cauchy-Schwarz inequality and the fact that $\big| |\va_j^* \vz|-|\va_j^* \vx| \big | \le |\va_j^* \vh|$ in the last inequality. 
This completes the claim \eqref{eq:Tjout}.

\end{proof}

\section{Discussions}
This paper considers the convergence of randomized Kaczmarz method for phase retrieval in the complex setting. A linear convergence rate has been established by combining restricted strong convexity condition and tools from stochastic process, which gives a positive answer for the conjecture given in  \cite[Section 7.2]{tan2019phase}.

There are some interesting problems for future research. First, it has been shown numerically that randomized Karzmarz method is also efficient for solving Fourier phase retrieval problem, at least when the measurements follow the coded diffraction pattern (CDP) model, it is of practical interest to provide some theoretical guarantees for it.  Second, the convergence of randomized Kaczmarz method relies on a spectral initialization. Some numerical evidence have shown that randomized Kaczmarz method  works well even if we start from an arbitrary initialization. It is interesting to provide some theoretical justifications for it.

\section{Appendix }
\begin{lemma} \label{le:ah2}
 Suppose that the vectors $\va_1,\ldots,\va_m \in \C^n$ are drawn uniformly from the unit sphere $\CS_{\C}^{n-1}$. For any  $0<\delta \le 1$, if $m\ge C\delta^{-2} n$ then
with probability at least $1-2\exp(-c \delta^2 m)$  it holds
\[
\left \| \frac{1}{m} \sum_{j=1}^m \va_j \va_j^* -\frac 1n \cdot I \right \|_2 \le \frac {\delta} n.   
\]
Here, $C$ and $c$ are universal positive constants.
\end{lemma}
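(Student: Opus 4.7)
The plan is to bound the operator norm of the centered second moment matrix
\[
A := \frac{1}{m}\sum_{j=1}^m \va_j\va_j^* - \frac{1}{n}I
\]
via a standard $\epsilon$-net argument on the complex unit sphere combined with a one-point Bernstein inequality. Since $\CS_{\C}^{n-1}$ has real dimension $2n$, it admits a $\tfrac14$-net $\cN$ with $|\cN| \le 9^{2n}$, and since $A$ is Hermitian the routine approximation gives $\|A\|_2 \le 2\sup_{\vu \in \cN} |\vu^* A \vu|$. So it suffices to control $|\vu^* A \vu|$ for each fixed $\vu \in \CS_{\C}^{n-1}$ and then union bound.

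Fix such a unit vector $\vu$ and write $\vu^* A \vu = \frac{1}{m}\sum_{j=1}^m Y_j$ with $Y_j := |\va_j^* \vu|^2 - \frac{1}{n}$. By rotational invariance of the uniform measure on $\CS_{\C}^{n-1}$, the distribution of $|\va_j^* \vu|^2$ depends only on $\norm{\vu}=1$ and in fact coincides with the squared modulus of the first coordinate of a uniform vector on $\CS_{\C}^{n-1}$, i.e.\ a $\mathrm{Beta}(1,n-1)$ random variable. The explicit moments
\[
\E|\va_j^* \vu|^{2p} \;=\; \frac{\Gamma(p+1)\,\Gamma(n)}{\Gamma(n+p)} \;\lesssim\; \frac{p!}{n^p}
\]
yield $\E Y_j = 0$, $\mathrm{Var}(Y_j) \le 1/n^2$, and sub-exponential norm $\|Y_j\|_{\psi_1} \lesssim 1/n$. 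Bernstein's inequality for sub-exponential variables then gives, for every $t>0$,
\[
\PP\!\left(\Big|\frac{1}{m}\sum_{j=1}^m Y_j\Big| \ge t\right) \le 2\exp\!\big(-c_1 m \min(t^2 n^2,\, t n)\big),
\]
and the choice $t = \delta/(2n)$ with $\delta \in (0,1]$ lands in the sub-Gaussian branch $t^2 n^2 = \delta^2/4$, producing a single-point failure probability at most $2\exp(-c_1 \delta^2 m /4)$.

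Union-bounding over $\cN$, the net-wide failure probability is at most $2 \cdot 9^{2n}\exp(-c_1 \delta^2 m / 4)$; under the hypothesis $m \ge C\delta^{-2} n$ with $C$ chosen so that $c_1 C \ge 16 \log 9$, the factor $9^{2n}$ is absorbed into the exponent to give $2\exp(-c\delta^2 m)$ for an absolute $c>0$. On the complementary event, $\|A\|_2 \le 2 \cdot \delta/(2n) = \delta/n$, which is the claim.

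The main obstacle, as I see it, is essentially bookkeeping: verifying the sub-exponential parameters of $Y_j$ from the Beta moments and tracking constants through the net argument to obtain the stated probability with the factor $\delta^2 m$ in the exponent. A conceptually cleaner alternative would be to write $\va_j = \xi_j/\norm{\xi_j}$ with $\xi_j$ a standard complex Gaussian in $\C^n$, apply the standard non-asymptotic covariance concentration $\|\frac{1}{m}\sum_j \xi_j \xi_j^* - I\|_2 \le \delta$ (which holds with probability at least $1-2\exp(-c\delta^2 m)$ whenever $m \ge C\delta^{-2}n$), and combine it with a scalar concentration of $\norm{\xi_j}^2$ around $n$ to handle the normalization.
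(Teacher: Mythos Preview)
Your proposal is correct and follows essentially the same approach as the paper: a $\tfrac14$-net on $\CS_{\C}^{n-1}$ of cardinality at most $9^{2n}$, the Hermitian quadratic-form reduction $\|A\|_2 \le 2\max_{\vu\in\cN}|\vu^* A\vu|$, a single-point Bernstein bound using that $|\va_j^*\vu|^2 - 1/n$ is mean-zero sub-exponential with $\psi_1$-norm $O(1/n)$, and a union bound absorbed by the assumption $m\ge C\delta^{-2}n$. Your explicit identification of the $\mathrm{Beta}(1,n-1)$ law and moment computation is more detailed than the paper (which simply cites a sub-exponential norm bound), and your suggested Gaussian alternative is not used there, but the main argument is the same.
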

\begin{proof}
Assume that $\mathcal{N}$ is an $1/4$-net of the complex unit sphere $\CS_{\C}^{n-1}\subset \C^n$. It then follows from \cite[Lemma 4.4.3]{Vershynin2018} that
\[
\left \| \frac{1}{m} \sum_{j=1}^m \va_j \va_j^* -\frac 1n \cdot I \right \|_2 \le 2\max_{\vh \in \mathcal{N}} \abs{ \frac{1}{m} \sum_{j=1}^m | \va_j^* \vh|^2 -\frac 1n }.
\]
Here, the cardinality $|\mathcal{N} | \le 9^{2n}$.  Due to the unitary invariance of $\va_j$, for any fixed $\vh \in \CS_{\C}^{n-1}$,  we have
 \[
 \E|\va_j^* \vh|^2 =\E|\va_j^* \ve_1|^2=\E|\va_j^* \ve_2|^2=\cdots=\E|\va_j^* \ve_n|^2=\frac 1n \E\norm{\va_j}^2 =\frac 1n.
 \]
It means that for any fixed $\vh \in \CS_{\C}^{n-1}\subset \C^n $, the terms $|\va_j^* \vh|^2-1/n$ are  independent, mean zero,  sub-exponential random variables with sub-exponential  norm bounded by $K=c_1/n$ for some universal constant $c_1>0$ \cite[Theorem 3.4.6]{Vershynin2018}. Using Bernstein's inequality, we obtain that for any $0<\delta\le 1$ with probability at least $1-2\exp(- c_2\delta^2 m)$,
\begin{equation*}
\abs{\frac{1}{m} \sum_{j=1}^m |\va_j^* \vh|^2- \frac1n} \le \frac{\delta}{2n}
\end{equation*}
holds for some positive constant $c_2$. Taking the union bound over $\mathcal{N}$, we obtain that
\[
\left \| \frac{1}{m} \sum_{j=1}^m \va_j \va_j^* -\frac 1n \cdot I \right \|_2 \le \frac{\delta}{n}
\]
holds with probability at least 
\[
1-2\exp(- c_2\delta^2 m)\cdot 9^{2n} \ge 1-2\exp(- c\delta^2 m),
\]
provided $m\ge C\delta^{-2} n$ for some constants $C, c>0$.
This completes the proof.

\end{proof}


\begin{lemma} \label{le:hxratiox}
Let $\vx$ be a vector in $ \C^n$ with $\norm{\vx}=1$ and  $\lambda \ge 3$. 
Assume that the vectors $\va_1,\ldots,\va_m \in \C^n$ are drawn uniformly from the unit sphere $\CS_{\C}^{n-1}$. 
For any fixed $0< \delta\le 1$,  there exist universal constants 
$C, c>0$ such that if $m\ge C \delta^{-2} \log(1/\delta)  n$  then with probability at least $1-6\exp(-c\delta^2 n)$ it holds 
\[
 \frac{1}{m} \sum_{j=1}^m  \frac{ \Re^2(\vh^* \va_j \va_j^* \vx )}{|\va_j^* \vx|^2} \cdot \1_{\dkh{ \lambda |\va_j^* \vx|\ge |\va_j^* \vh|}} \ge \frac 1n \cdot \xkh{\frac 3{8} - \frac 1{(1+0.99\lambda)^2} -\delta} 
\]
for all $\vh \in \C^n$ with $\norm{\vh}=1$ and $\Im(\vh^* \vx)=0$. 
\end{lemma}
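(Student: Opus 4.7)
The plan is pointwise concentration via Bernstein's inequality, followed by an $\eta$-net argument. By unitary invariance of the uniform distribution on $\CS_\C^{n-1}$, rotate so that $\vx = \ve_1$; the constraint $\Im(\vh^*\vx) = 0$ then forces the first coordinate of $\vh$ to be real, and a further rotation in the orthogonal complement aligns $\vh - h_1\ve_1$ with $\ve_2$, giving $\vh = h_1 \ve_1 + \sqrt{1-h_1^2}\,\ve_2$ with $h_1 \in [-1,1]$. Writing $a_j = |a_j|e^{i\theta_j}$, the integrand takes the explicit form
\[
\frac{\Re^2(\vh^*\va\va^*\vx)}{|\va^*\vx|^2} = h_1^2 |a_1|^2 + 2 h_1 \sqrt{1-h_1^2}\, |a_1||a_2|\cos(\theta_2 - \theta_1) + (1-h_1^2)|a_2|^2 \cos^2(\theta_2-\theta_1),
\]
while the indicator event becomes $\lambda^2|a_1|^2 \ge |h_1 a_1 + \sqrt{1-h_1^2}\, a_2|^2$.

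For fixed $\vh$, I would lower-bound the pointwise mean $\mu(\vh)$ of this quantity by switching to the Gaussian representation $\va = \xi/\norm{\xi}$ with $\xi \sim \cN_\C(0,I_n)$: the pair $(\xi_1,\xi_2)$ is independent of $S := \sum_{i \ge 3}|\xi_i|^2$, and $S$ concentrates sharply around $n-2$, so $\norm{\xi}^2$ is approximately $n$ up to a multiplicative slack that produces the constant $0.99$ in the final bound. Conditioning on $(\xi_1,\xi_2)$, whose squared magnitudes are independent standard exponentials and whose arguments are independent uniform, reduces $\mu(\vh)$ to an explicit two-dimensional integral times $\E[\norm{\xi}^{-2}]\approx 1/n$. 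Using $\E\cos^2(\theta_2-\theta_1) = 1/2$ and an explicit tail computation of the probability of the ``bad'' indicator event against the Rayleigh-type marginals for $(|\xi_1|^2,|\xi_2|^2)$ yields the uniform lower bound $\mu(\vh) \ge \frac{1}{n}\bigl(\frac{3}{8} - \frac{1}{(1+0.99\lambda)^2}\bigr)$ over all admissible $\vh$.

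For the concentration step at a fixed $\vh$, each summand $X_j := \frac{\Re^2(\vh^*\va_j\va_j^*\vx)}{|\va_j^*\vx|^2}\,\1_{\{\lambda|\va_j^*\vx|\ge|\va_j^*\vh|\}}$ is bounded above by $|\vh^*\va_j|^2 \le 1$ (via $|\Re(\vh^*\va\va^*\vx)|\le|\vh^*\va||\va^*\vx|$), with $\E X_j = O(1/n)$ and $\E X_j^2 \le \E|\vh^*\va_j|^4 = O(1/n^2)$. Bernstein's inequality then gives $\PP(|\tfrac{1}{m}\sum_j X_j - \mu(\vh)| > \delta/n) \le 2\exp(-c\delta^2 m)$. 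A union bound over an $\eta$-net $\mathcal{N}$ of size $(C/\eta)^{2n-1}$ on the $(2n-1)$-real-dimensional set $\{\vh \in \C^n : \norm{\vh}=1,\ \Im(\vh^*\vx)=0\}$ absorbs the net size into the sample complexity $m \ge C\delta^{-2}\log(1/\delta)\,n$, yielding the concentration uniformly on $\mathcal{N}$ with probability at least $1 - 6\exp(-c\delta^2 n)$.

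The hard part is extending from $\mathcal{N}$ to the full sphere, because the indicator $\1_{\{\lambda|\va^*\vx|\ge|\va^*\vh|\}}$ is discontinuous in $\vh$. The key pointwise estimate $\bigl||\va^*\vh|-|\va^*\vh'|\bigr|\le\norm{\vh-\vh'}\le\eta$ yields the monotonicity $\1_{\{\lambda|\va^*\vx|\ge|\va^*\vh|\}}\ge\1_{\{\lambda|\va^*\vx|\ge|\va^*\vh'|+\eta\}}$, enabling a lower bound on the $\vh$-sum in terms of the $\vh'$-sum with a slightly more restrictive threshold. Coupled with the Lipschitz bound $|f_j(\vh)-f_j(\vh')| \lesssim (|\vh^*\va_j|+|\vh'^*\va_j|)\,\norm{\vh-\vh'}$ (summed via Cauchy--Schwarz using $\tfrac{1}{m}\sum_j|\vh^*\va_j|^2 \lesssim 1/n$ from Lemma \ref{le:ah2}) and by redoing the expectation computation of step 2 with the perturbed threshold, the overall approximation error is $O(\delta/n)$. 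Matching the indicator-threshold perturbation to the net scale $\eta$ is the principal technical obstacle.
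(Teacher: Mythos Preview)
Your overall architecture---pointwise Bernstein, expectation via the Gaussian representation, then an $\eta$-net---is the same as the paper's. The expectation step you sketch is handled in the paper by a separate technical lemma (Lemma~\ref{le:hxratioxexpec}) via polar coordinates and a power-series expansion; your outline there is thin but not wrong in spirit.

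The genuine gap is in the net extension, which you correctly flag as the principal obstacle but do not actually resolve. Your monotonicity move
\[
\1_{\{\lambda|\va^*\vx|\ge|\va^*\vh|\}}\ \ge\ \1_{\{\lambda|\va^*\vx|\ge|\va^*\vh'|+\eta\}}
\]
fails on scale grounds. Since $\norm{\va_j}=1$, the quantities $|\va_j^*\vx|$ and $|\va_j^*\vh|$ are of order $1/\sqrt{n}$, whereas the net mesh $\eta$ must be of constant order in $n$ (roughly $\eta\sim\delta$) for the union bound over a net of size $(C/\eta)^{2n}$ to be absorbed by $m\gtrsim \delta^{-2}\log(1/\delta)\,n$. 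For large $n$ the additive shift $\eta$ therefore dominates $\lambda|\va_j^*\vx|$ and the perturbed indicator is essentially zero, so the resulting lower bound is vacuous. ``Redoing the expectation with the perturbed threshold'' cannot repair this, because the perturbed event has negligible probability.

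The paper's remedy is to smooth the indicator \emph{before} concentrating: replace $\1_{\{\lambda|\va_j^*\vx|\ge t\}}$ by a piecewise-linear $\chi_j(t)$ that drops from $1$ to $0$ on $[0.99\lambda|\va_j^*\vx|,\,\lambda|\va_j^*\vx|]$, so that the original indicator is sandwiched between $\chi_j$ and the indicator with threshold $0.99\lambda$ (this is exactly where the constant $0.99$ in the statement comes from). The Lipschitz constant of $\chi_j$ is $100/(\lambda|\va_j^*\vx|)$, which is large, but in the product $\frac{\Re^2(\vh^*\va_j\va_j^*\vx)}{|\va_j^*\vx|^2}\,\chi_j(|\va_j^*\vh|)$ the prefactor is at most $|\va_j^*\vh|^2$ and $\chi_j$ is supported where $|\va_j^*\vh|\le\lambda|\va_j^*\vx|$; these combine to give a per-term perturbation bound of order $|\va_j^*\vh|\cdot|\va_j^*(\vh-\vh_0)|$, which sums to $O(\varepsilon/n)$ via Cauchy--Schwarz and Lemma~\ref{le:ah2}. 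Bernstein and the net are then applied to the smoothed sum, and the extension from the net to the full sphere is clean. This multiplicative (rather than additive) handling of the threshold is the missing idea in your proposal.
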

\begin{proof}
We first prove the result for any fixed $\vh$ and then apply an $\varepsilon$-net argument to develop a uniform bound for it.
To begin with, we introduce a series of auxiliary random Lipschitz functions to approximate the indicator functions. For any $j=1,\ldots,m$, define
\[
\chi_j(t):=\left\{ \begin{array}{ll} 
                         1,& \mbox{if} \quad t \le 0.99\lambda |\va_j^* \vx|; \\
                         -\frac{100}{\lambda |\va_j^* \vx|} t+100 ,& \mbox{if} \quad  0.99\lambda |\va_j^* \vx| \le t \le \lambda |\va_j^* \vx| ;\\
                         0,& \mbox{otherwise}.
                         \end{array}
                         \right.
\]
It then gives 
\begin{equation}\label{eq:chi}
 \frac{ \Re^2(\vh^* \va_j \va_j^* \vx )}{|\va_j^* \vx|^2} \cdot \1_{\dkh{ \lambda |\va_j^* \vx|\ge |\va_j^* \vh|}} \ge \frac{ \Re^2(\vh^* \va_j \va_j^* \vx )}{|\va_j^* \vx|^2} \chi_j(|\va_j^* \vh|) \ge  \frac{ \Re^2(\vh^* \va_j \va_j^* \vx )}{|\va_j^* \vx|^2} \cdot \1_{\dkh{ 0.99\lambda |\va_j^* \vx|\ge |\va_j^* \vh|}}.
\end{equation}
For any fixed $\vh$, since $\va_1,\ldots,\va_m $ are random vectors uniformly distributed on the unit sphere, it means that  the terms $ \frac{ \Re^2(\vh^* \va_j \va_j^* \vx )}{|\va_j^* \vx|^2} \chi_j(|\va_j^* \vh|) $ are independent sub-exponential random variables with the maximal sub-exponential  norm $K=c_1/n$ for some universal constant $c_1>0$ \cite[Theorem 3.4.6]{Vershynin2018}.
Apply Bernstein's inequality gives that for any fixed $0<\delta\le 1$ the following holds
\begin{equation}  \label{eq:fixhlowb}
 \frac{1}{m} \sum_{j=1}^m  \frac{ \Re^2(\vh^* \va_j \va_j^* \vx )}{|\va_j^* \vx|^2} \chi_j(|\va_j^* \vh|) \ge \E \xkh{\frac{ \Re^2(\vh^* \va \va^* \vx )}{|\va^* \vx|^2} \chi_j(|\va^* \vh|) } - \frac{\delta}{4n}
\end{equation}
with probability at least  $1-2\exp(- c_2\delta^2 m)$, where $c_2$ is a universal positive constant. Here, $\va \in \C^n$ is a vector uniformly distributed on the unit sphere.

Next, we give a uniform bound for the estimate \eqref{eq:fixhlowb}.  Construct an $\varepsilon$-net $\mathcal{N}$ over the unit sphere in $\C^n$ with cardinality $|\mathcal{N}| \le (1+\frac{2}{\varepsilon})^{2n}$.
Then we have
\[
 \frac{1}{m} \sum_{j=1}^m  \frac{ \Re^2(\vh^* \va_j \va_j^* \vx )}{|\va_j^* \vx|^2} \chi_j(|\va_j^* \vh|)  \ge \E \xkh{\frac{ \Re^2(\vh^* \va \va^* \vx )}{|\va^* \vx|^2} \chi_j(|\va^* \vh|) }-\frac{\delta}{4n} \quad \mbox{for all} \quad \vh \in \mathcal{N}
\]
with probability at least 
\[
1-2\exp(- c_2\delta^2 m) \cdot (1+\frac{2}{\varepsilon})^{2n}.
\]
For any $\vh$ with $\norm{\vh}=1$, there exists a $\vh_0 \in \mathcal{N}$ such that $\norm{\vh-\vh_0}\le \varepsilon$.
We claim that there exist universal constants $C', c_3>0$ such that if $m\ge C' n$ then with probability at least $1-2\exp(- c_3 m)$ it holds
\begin{equation} \label{eq:claimlipscthz}
\left| \frac{1}{m} \sum_{j=1}^m  \frac{ \Re^2(\vh^* \va_j \va_j^* \vx )}{|\va_j^* \vx|^2} \chi_j(|\va_j^* \vh|)-\frac{1}{m} \sum_{j=1}^m  \frac{ \Re^2(\vh_0^* \va_j \va_j^* \vx )}{|\va_j^* \vx|^2} \chi_j(|\va_j^* \vh_0|)   \right|  \le \frac{205\varepsilon}{n}.
\end{equation}
Choosing $\varepsilon:=\delta/820$,  we then obtain that
\begin{equation} \label{eq:unibojnd}
 \frac{1}{m} \sum_{j=1}^m  \frac{ \Re^2(\vh^* \va_j \va_j^* \vx )}{|\va_j^* \vx|^2} \chi_j(|\va_j^* \vh|)  \ge \E \xkh{\frac{ \Re^2(\vh^* \va \va^* \vx )}{|\va^* \vx|^2} \chi_j(|\va^* \vh|) }-\frac{\delta}{2n} \quad \mbox{for all} \quad \norm{\vh}=1
\end{equation}
holds with probability at least 
\[
1-2\exp(- c_3 m)-2\exp(- c_2 \delta^2 m)(1+\frac{2}{\varepsilon})^{2n} \ge 1-4\exp(- c_4 \delta^2 m),
\]
provided $m\ge C  \log(1/\delta) \delta^{-2} n$ for some positive constant $C$. Here  $c_4$ is a universal positive constant. To give a lower bound for the expectation in \eqref{eq:unibojnd},  recognize that if  $\xi \in \C^n$ is a complex Gaussian random vector then  $\xi/\norm{\xi}$ is a vector uniformly distributed on the unit sphere.
Since  $\norm{\xi} \le (1+\delta_1) \sqrt{n}$ holds for any fixed $0<\delta_1\le 1$ \cite[Theorem 3.1.1]{Vershynin2018}  with probability at least $1-2\exp(- c_5\delta_1^2 n)$ for some universal constant $c_5>0$, it then follows from Lemma \ref{le:hxratioxexpec} that
\begin{eqnarray*}
\E \xkh{\frac{ \Re^2(\vh^* \va \va^* \vx )}{|\va^* \vx|^2} \cdot \1_{\dkh{\lambda |\va^* \vx|\ge |\va^* \vh|}}} &=& \E \xkh{\frac{ \Re^2(\vh^* \xi \xi^* \vx )}{\norm{\xi}^2 |\xi^* \vx|^2} \cdot \1_{\dkh{\lambda |\xi^* \vx|\ge |\xi^* \vh|}}} \nonumber \\
&\ge &\frac{1}{(1+3\delta_1)n}\cdot   \E \xkh{\frac{ \Re^2(\vh^* \xi \xi^* \vx )}{ |\xi^* \vx|^2} \cdot \1_{\dkh{\lambda |\xi^* \vx|\ge |\xi^* \vh|}}} \nonumber\\
&\ge& \frac{1}{(1+3\delta_1)n}\cdot \xkh{ \frac 38 -\frac 1{(\lambda+1)^2}}.
\end{eqnarray*}
Taking $\delta_1:= \delta/2$, we obtain that  for any $\lambda\ge 2.95$ with probability at least $1-2\exp(- c_6 \delta^2 n)$ it holds
\begin{equation} \label{eq:claexpe}
\E \xkh{\frac{ \Re^2(\vh^* \va \va^* \vx )}{|\va^* \vx|^2} \cdot \1_{\dkh{\lambda |\va^* \vx|\ge |\va^* \vh|}}}  \ge \frac 1n \cdot \xkh{\frac 3{8} - \frac 1{(\lambda+1)^2} -\frac{\delta}{2}},
\end{equation}
where $c_6>0 $ is a universal constant.
Collecting  \eqref{eq:chi}, \eqref{eq:unibojnd} and \eqref{eq:claexpe} together, we obtain the conclusion that for any $\lambda\ge 3$ with probability at least $1-6\exp(- c \delta^2 n)$ it holds
\[
 \frac{ \Re^2(\vh^* \va_j \va_j^* \vx )}{|\va_j^* \vx|^2} \cdot \1_{\dkh{ \lambda |\va_j^* \vx|\ge |\va_j^* \vh|}} \ge \frac 1n \cdot \xkh{\frac 3{8} - \frac 1{(1+0.99 \lambda)^2} -\delta},
\]
provided $m\ge C  \log(1/\delta) \delta^{-2} n$. Here, $c$ is a universal positive constant.

Finally, it remains to prove the claim \eqref{eq:claimlipscthz}.  To this end, we claim that for all $j=1,\ldots,m$ it holds
\begin{eqnarray}
&& \left|  \frac{ \Re^2(\vh^* \va_j \va_j^* \vx )}{|\va_j^* \vx|^2} \chi_j(|\va_j^* \vh|)-  \frac{ \Re^2(\vh_0^* \va_j \va_j^* \vx )}{|\va_j^* \vx|^2} \chi_j(|\va_j^* \vh_0|)   \right|   \vspace{2em}  \nonumber\\ 
&\le& 101  |\va_j^* \vh| |\va_j^*( \vh-\vh_0)|+   101|\va_j^* \vh_0| |\va_j^*( \vh-\vh_0)|.  \label{eq:claimdiffer}
\end{eqnarray}
Indeed, from the definition of $\chi_j(t)$, if  both $|\va_j^* \vh| >\lambda |\va_j^* \vx|$ and $|\va_j^* \vh_0| >\lambda |\va_j^* \vx|$ then the above inequality holds directly. Thus, we only need to consider the case where  $|\va_j^* \vh| \le \lambda |\va_j^* \vx|$ or $|\va_j^* \vh_0| \le \lambda |\va_j^* \vx|$. Without loss of generality, we assume $|\va_j^* \vh| \le \lambda |\va_j^* \vx|$. Then we have
\begin{eqnarray*}
&& \left|  \frac{ \Re^2(\vh^* \va_j \va_j^* \vx )}{|\va_j^* \vx|^2} \chi_j(|\va_j^* \vh|)-  \frac{ \Re^2(\vh_0^* \va_j \va_j^* \vx )}{|\va_j^* \vx|^2} \chi_j(|\va_j^* \vh_0|)   \right| \\
&\le &  |\va_j^* \vh|^2 \abs{ \chi_j(|\va_j^* \vh|)-\chi_j(|\va_j^* \vh_0|)} +   \xkh{|\va_j^* \vh|+|\va_j^* \vh_0|} |\va_j^*( \vh-\vh_0)| \\
&\le &  \frac{100 |\va_j^* \vh|^2}{\lambda  |\va_j^* \vx|} |\va_j^*( \vh-\vh_0)|+   \xkh{|\va_j^* \vh|+|\va_j^* \vh_0|} |\va_j^*( \vh-\vh_0)|\\
&\le & 101  |\va_j^* \vh| |\va_j^*( \vh-\vh_0)|+   |\va_j^* \vh_0| |\va_j^*( \vh-\vh_0)| ,
\end{eqnarray*}
which gives \eqref{eq:claimdiffer}. According to Lemma \ref{le:ah2}, we obtain that for $m\ge C' n$ with probability at least $1-2\exp(- c_3 n)$ it holds
\begin{eqnarray*}
&& \left| \frac{1}{m} \sum_{j=1}^m  \frac{ \Re^2(\vh^* \va_j \va_j^* \vx )}{|\va_j^* \vx|^2} \chi_j(|\va_j^* \vh|)-\frac{1}{m} \sum_{j=1}^m  \frac{ \Re^2(\vh_0^* \va_j \va_j^* \vx )}{|\va_j^* \vx|^2} \chi_j(|\va_j^* \vh_0|)   \right| \\
&\le & \frac{101}{m} \sum_{j=1}^m  |\va_j^* \vh| |\va_j^*( \vh-\vh_0)|+  \frac{101}{m} \sum_{j=1}^m |\va_j^* \vh_0| |\va_j^*( \vh-\vh_0)| \\
&\le & 101 \sqrt{ \frac{1}{m} \sum_{j=1}^m  |\va_j^* \vh|^2}\sqrt{ \frac{1}{m} \sum_{j=1}^m   |\va_j^*( \vh-\vh_0)|^2}+101\sqrt{ \frac{1}{m} \sum_{j=1}^m  |\va_j^* \vh_0|^2}\sqrt{ \frac{1}{m} \sum_{j=1}^m   |\va_j^*( \vh-\vh_0)|^2} \\
&\le & \frac{205\varepsilon}{n},
\end{eqnarray*}
which proves the claim \eqref{eq:claimlipscthz}.

\end{proof}

\begin{lemma} \label{le:ah2complem}
Let $\vx$ be a vector in $ \C^n$ with $\norm{\vx}=1$ and  $0<\lambda \le 0.4 $. 
Assume that the vectors $\va_1,\ldots,\va_m \in \C^n$ are drawn uniformly from the unit sphere $\CS_{\C}^{n-1}$.  For any fixed $0<\delta\le 1$,  there exist universal constants $C, c>0$ such that for $m\ge C  \delta^{-2}\log(1/\delta) n$, with probability at least $1-6\exp(-c\delta^2 n)$,  it holds
\[
 \frac{1}{m} \sum_{j=1}^m  |\va_j^* \vh|^2 \cdot \1_{\dkh{  |\va_j^* \vx| \le \lambda |\va_j^* \vh|}} \le \frac{2\lambda^2}{(\lambda^2+0.99)n}+\frac{\delta}{n}
\]
for all  $\vh \in \C^n$ with $\norm{\vh}=1$ and $\Im(\vh^* \vx)=0$. 
\end{lemma}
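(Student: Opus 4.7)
The plan is to mirror the proof of Lemma~\ref{le:hxratiox}: upper-bound the indicator by a Lipschitz surrogate, apply Bernstein's inequality for each fixed $\vh$, pass to a uniform bound over the sphere by an $\varepsilon$-net plus Lipschitz continuity, and finally bound the resulting expectation via the Gaussian representation $\va=\xi/\|\xi\|$.

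First I would introduce, for each $j$ and a small parameter $\eta = \eta(\delta)$ to be chosen later, the Lipschitz envelope $\psi_j(t)$ that equals $0$ for $t \le (1-\eta)|\va_j^*\vx|/\lambda$, equals $1$ for $t \ge |\va_j^*\vx|/\lambda$, and is linear on the intermediate interval. This yields the sandwich
\[
\1_{\{|\va_j^*\vx|\le\lambda|\va_j^*\vh|\}} \;\le\; \psi_j(|\va_j^*\vh|) \;\le\; \1_{\{|\va_j^*\vx|\le \lambda(1-\eta)^{-1}|\va_j^*\vh|\}},
\]
so it suffices to control $g(\vh) := \tfrac{1}{m}\sum_{j} |\va_j^*\vh|^2\psi_j(|\va_j^*\vh|)$. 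For any fixed $\vh\in\CS_\C^{n-1}$, each summand is bounded by $|\va_j^*\vh|^2$ and is sub-exponential with norm $O(1/n)$ by \cite[Theorem~3.4.6]{Vershynin2018}, so Bernstein's inequality gives $g(\vh) \le \E g(\vh) + \delta/(4n)$ with probability at least $1 - 2\exp(-c\delta^2 m)$.

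To uniformize over the sphere I would take an $\varepsilon$-net $\mathcal{N}\subset\CS_\C^{n-1}$ of cardinality $(1+2/\varepsilon)^{2n}$ and union-bound. The Lipschitz step is the delicate point: the Lipschitz constant of $\psi_j$ is $\lambda/(\eta|\va_j^*\vx|)$, which can be arbitrarily large. The saving observation is that $\psi_j(|\va_j^*\vh|)$ changes only when $|\va_j^*\vh|$ lies in the transition interval $[(1-\eta)|\va_j^*\vx|/\lambda,\,|\va_j^*\vx|/\lambda]$, on which $|\va_j^*\vh| \le |\va_j^*\vx|/\lambda$ and hence $|\va_j^*\vh|^2 \cdot \lambda/(\eta|\va_j^*\vx|) \le |\va_j^*\vh|/\eta$; this yields the per-sample estimate
\[
\bigl||\va_j^*\vh|^2\psi_j(|\va_j^*\vh|) - |\va_j^*\vh_0|^2\psi_j(|\va_j^*\vh_0|)\bigr| \lesssim \eta^{-1}\bigl(|\va_j^*\vh| + |\va_j^*\vh_0|\bigr)\,|\va_j^*(\vh-\vh_0)|
\]
in direct analogy with~\eqref{eq:claimdiffer}. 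Cauchy--Schwarz plus Lemma~\ref{le:ah2} then give $|g(\vh)-g(\vh_0)| \lesssim \varepsilon/(n\eta)$ uniformly, and choosing $\varepsilon$ a small constant multiple of $\delta\eta$ absorbs the Lipschitz error into the Bernstein tolerance. The net term $(1/\varepsilon)^{2n}$ is dominated by $\exp(c\delta^2 m/2)$ provided $m \gtrsim \delta^{-2}\log(1/\delta)\,n$.

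It remains to bound $\E g(\vh)$. Writing $\va = \xi/\|\xi\|$ for $\xi\sim\cN_\C(0,I_n)$ and conditioning on the high-probability event $\|\xi\|^2 \ge 0.99n$ gives
\[
\E g(\vh) \le \frac{1}{0.99\,n}\,\E\bigl[|\xi^*\vh|^2 \1_{\{|\xi^*\vx|\le \lambda(1-\eta)^{-1}|\xi^*\vh|\}}\bigr] + O(e^{-cn}).
\]
Decomposing $\vh = \alpha\vx + \beta\bm{w}$ with $\bm{w}\perp\vx$ and $\|\bm{w}\|=1$, the hypothesis $\Im(\vh^*\vx)=0$ forces $\alpha\in\R$, and $U:=\xi^*\vx$, $W:=\xi^*\bm{w}$ are independent standard complex Gaussians. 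An explicit computation---of the flavour used in Lemma~\ref{le:hxratioxexpec}---shows the expectation is maximized at $\alpha = 0$, evaluating to $\lambda'^2(2+\lambda'^2)/(1+\lambda'^2)^2$ with $\lambda' := \lambda/(1-\eta)$. Elementary algebra (using $\lambda\le 0.4$) confirms that $\tfrac{1}{0.99}\lambda'^2(2+\lambda'^2)/(1+\lambda'^2)^2 \le 2\lambda^2/(\lambda^2+0.99) + \delta/2$ once $\eta$ is taken to be a sufficiently small constant multiple of $\delta$, completing the proof. The main obstacle is the Lipschitz step above: once one recognizes that the transition interval of $\psi_j$ forces $|\va_j^*\vh| \le |\va_j^*\vx|/\lambda$, the apparent $1/|\va_j^*\vx|$ blow-up is neutralized, and the remainder parallels Lemma~\ref{le:hxratiox}.
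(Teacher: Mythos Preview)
Your proposal is correct and follows essentially the same route as the paper: Lipschitz surrogate for the indicator, Bernstein plus $\varepsilon$-net for uniformity, and a Gaussian expectation bound showing the worst case is $\vh\perp\vx$.

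The one noteworthy technical difference is in the surrogate. You smooth the \emph{indicator} and keep the factor $|\va_j^*\vh|^2$ outside, so $\psi_j$ has Lipschitz constant $\lambda/(\eta|\va_j^*\vx|)$ and you need your ``saving observation'' (that on the transition band $|\va_j^*\vh|\le|\va_j^*\vx|/\lambda$) plus a small case split to neutralize the $1/|\va_j^*\vx|$. The paper instead smooths the whole summand: it sets $\chi_j(t)=t$ for $t\ge|\va_j^*\vx|^2/\lambda^2$, $\chi_j(t)=0$ for $t\le0.99|\va_j^*\vx|^2/\lambda^2$, with a linear piece of slope $100$ in between, and works with $\chi_j(|\va_j^*\vh|^2)$. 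This makes $\chi_j$ globally $100$-Lipschitz, independent of $|\va_j^*\vx|$, so the net step is a one-line Cauchy--Schwarz with no case analysis. Your approach costs an extra parameter $\eta$ and a few lines; the paper's costs nothing but the observation that folding the quadratic factor into the surrogate flattens the slope. For the expectation step, the paper proves your assertion that the maximum occurs at $\alpha=0$ via the polar-coordinate/derivative computation you allude to, obtaining exactly $\lambda'^2(\lambda'^2+2)/(\lambda'^2+1)^2\le 2\lambda'^2/(\lambda'^2+1)$ as you do.
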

\begin{proof}
Due to the non-Lipschitz of indicator functions, we introduce a series of auxiliary random Lipschitz functions to approximate them. For any $j=1,\ldots,m$, define
\[
\chi_j(t):=\left\{ \begin{array}{ll} 
                         t,& \mbox{if} \quad t \ge |\va_j^* \vx|^2/\lambda^2 ; \\
                         100t-\frac{99 |\va_j^* \vx|^2}{\lambda^2 } ,& \mbox{if} \quad  0.99|\va_j^* \vx|^2/\lambda^2 \le t \le |\va_j^* \vx|^2/\lambda^2;\\
                         0,& \mbox{otherwise}.
                         \end{array}
                         \right.
\]
Then it is easy to check that
\begin{equation}\label{eq:chithesec}
  \frac{1}{m} \sum_{j=1}^m  |\va_j^* \vh|^2 \cdot \1_{\dkh{  |\va_j^* \vx| \le \lambda |\va_j^* \vh|}}  \le  \frac{1}{m} \sum_{j=1}^m  \chi_j(|\va_j^* \vh|^2) \le  \frac{1}{m} \sum_{j=1}^m  |\va_j^* \vh|^2 \cdot \1_{\dkh{ 0.99 |\va_j^* \vx| \le \lambda |\va_j^* \vh|}}.
\end{equation}
For any fixed $\vh$, since $\va_1,\ldots,\va_m $ are drawn uniformly from the unit sphere $\CS_{\C}^{n-1}$, thus  the terms $\chi_j(|\va_j^* \vh|^2) $ are independent sub-exponential random variables with the maximal sub-exponential  norm $K=c_1/n$ for some universal constant $c_1>0$. According to Bernstein's inequality, for any fixed $0<\delta\le 1$, with probability at least  $1-2\exp(- c_2\delta^2 m)$,  it holds
\begin{equation} \label{eq:bern2}
\abs{  \frac{1}{m} \sum_{j=1}^m  \chi_j(|\va_j^* \vh|^2) -\E\zkh{ \chi_j(|\va^* \vh|^2)  }} \le \frac{\delta}{4n},
\end{equation}
where $c_2$ is a universal positive constant. Here, $\va \in \C^n$ is a vector uniformly distributed on the unit sphere.

To give a uniform bound for the the estimate \eqref{eq:bern2}, we  construct an $\varepsilon$-net $\mathcal{N}$ over the unit sphere in $\C^n$ with cardinality $|\mathcal{N}| \le (1+\frac{2}{\varepsilon})^{2n}$.
Then for any $\vh$ with $\norm{\vh}=1$, there exists a $\vh_0 \in \mathcal{N}$ such that $\norm{\vh-\vh_0}\le \varepsilon$.
Note that $\chi_j(t)$ is a Lipschitz function with Lipschitz  constant $100$. It then follows from  Lemma \ref{le:ah2} that for $m\ge C' n$ with probability at least $1-2\exp(- c_3 m)$ it holds
\begin{eqnarray*}
&&\abs{  \frac{1}{m} \sum_{j=1}^m  \chi_j(|\va_j^* \vh|^2) -  \frac{1}{m} \sum_{j=1}^m \chi_j(|\va_j^* \vh_0|^2) } \\
&\le &  \frac{100}{m} \sum_{j=1}^m  \xkh{|\va_j^* \vh|+|\va_j^* \vh_0|} |\va_j^* (\vh-\vh_0)|\\
&\le & 100 \sqrt{ \frac{1}{m} \sum_{j=1}^m |\va_j^* \vh|^2} \sqrt{ \frac{1}{m} \sum_{j=1}^m |\va_j^* (\vh-\vh_0)|^2}  +100 \sqrt{ \frac{1}{m} \sum_{j=1}^m |\va_j^* \vh_0|^2} \sqrt{ \frac{1}{m} \sum_{j=1}^m |\va_j^* (\vh-\vh_0)|^2} \\
&\le & \frac{202 \varepsilon}{n},
\end{eqnarray*}
where the third line follows from the Cauchy-Schwarz inequality.
Choosing $\varepsilon:=\delta/808$ and taking the union bound over $\mathcal{N}$, we obtain that 
\begin{equation} \label{eq:unibojnd2le}
 \frac{1}{m} \sum_{j=1}^m  \chi_j(|\va_j^* \vh|^2) \ge \E \xkh{ \frac{1}{m} \sum_{j=1}^m  \chi_j(|\va^* \vh|^2) }-\frac{\delta}{2n} \quad \mbox{for all} \quad \norm{\vh}=1
\end{equation}
holds with probability at least 
\[
1-2\exp(- c_3 m)-2\exp(- c_2\delta^2 m)(1+\frac{2}{\varepsilon})^{2n} \ge 1-4\exp(- c_4 \delta^2 m),
\]
provided $m\ge C  \delta^{-2}\log(1/\delta) n$ for some positive constants $C, c_4$.

Finally, we need to lower bound the expectation. To this end,  let $\xi \in \C^n$ be a complex Gaussian random vector.  We claim that for any $0<\lambda \le \sqrt{\frac{5-\sqrt{21}}{2}}$ it holds
 \begin{equation} \label{eq:exahsqu}
 \E \xkh{|\xi^* \vh|^2 \cdot \1_{\dkh{  |\xi^* \vx| \le \lambda |\xi^* \vh|}}} \le \frac{2\lambda^2}{\lambda^2+1}.
 \end{equation}
Since $\xi/\norm{\xi}$ is a vector uniformly distributed on the unit sphere and $\norm{\xi} \le (1-\delta_0) \sqrt{n}$ holds  for any fixed $0<\delta_0 \le 1$ with probability at least $1-2\exp(- c_5\delta_0^2 n)$ \cite[Theorem 3.1.1]{Vershynin2018}.
It then gives that for any $0<\lambda \le \sqrt{\frac{5-\sqrt{21}}{2}}$ with probability at least $1-2\exp(- c_5\delta_0^2 n)$ we have
 \begin{eqnarray}
 \E \xkh{ |\va^* \vh|^2 \cdot \1_{\dkh{  |\va^* \vx| \le \lambda |\va^* \vh|}} } &= &\E \xkh{\frac{|\xi^* \vh|^2}{\norm{\xi}^2} \cdot \1_{\dkh{  |\xi^* \vx| \le \lambda |\xi^* \vh|}}}  \nonumber\\
 &\le & \frac{1}{(1-\delta_0)n} \cdot  \E \xkh{|\xi^* \vh|^2 \cdot \1_{\dkh{  |\xi^* \vx| \le \lambda |\xi^* \vh|}}} \nonumber\\
 &\le &  \frac{1}{(1-\delta_0)n} \cdot  \frac{2\lambda^2}{\lambda^2+1}.  \label{eq:exahabssq}
 \end{eqnarray}
 Taking the constant $\delta_0 = \delta/3$, it then follows from \eqref{eq:chithesec}, \eqref{eq:unibojnd2le} and \eqref{eq:exahabssq} that for any fixed $0<\lambda \le 0.4$,  with  probability at least $1-6\exp(- c\delta^2 n)$, it holds
 \[
 \frac{1}{m} \sum_{j=1}^m  |\va_j^* \vh|^2 \cdot \1_{\dkh{  |\va_j^* \vx| \le \lambda |\va_j^* \vh|}} \le  \frac{2\lambda^2}{(\lambda^2+0.99)n}+\frac{\delta}{n},
 \]
provided $m\ge C  \delta^{-2}\log(1/\delta) n$, where $c$ is a universal positive constant. This completes the proof.

It remains to prove the claim \eqref{eq:exahsqu}.  Indeed, due to the unitary invariance of Gaussian random vector, without loss of generality, we assume $\vh=\ve_1$ and $\vx=\sigma \ve_1 + \tau e^{i \phi} \ve_2$, where $\sigma=\vh^*\vx \in \R, \abs{\sigma} \le 1$ and $\tau=\sqrt{1-\sigma^2} $.
Let $\xi_1, \xi_2$ be the first and second entries of $\xi$. Denote $\xi_1=\xi_{1,\Re}+ i \xi_{1,\Im}$ and $\xi_2=\xi_{2,\Re}+ i \xi_{2,\Im}$ where $\xi_{1,\Re}, \xi_{1,\Im}, \xi_{2,\Re}, \xi_{2,\Im} $
are independent Gaussian random variables with distribution  $\mathcal{N}(0,1/2)$. Then the inequality $ |\xi^* \vx| \le \lambda |\xi^* \vh|$ is equivalent to 
\begin{eqnarray*}
\xkh{\sigma\xi_{1,\Re} +\tau (\cos\phi \xi_{2,\Re}+\sin\phi \xi_{2,\Im}) }^2+\xkh{\sigma\xi_{1,\Im} -\tau (\sin\phi \xi_{2,\Re}-\cos\phi \xi_{2,\Im}) }^2  \le \lambda(\xi_{1,\Re}^2+\xi_{1,\Im}^2 ).
\end{eqnarray*}
To prove the inequality \eqref{eq:exahsqu}, we take the polar coordinates transformations and denote
\[
\left.
\begin{array}{r}
\xi_{1,\Re}= r_1 \cos \theta_1 \\
\xi_{1,\Im}= r_1 \sin \theta_1 \\
\sigma\xi_{1,\Re} +\tau (\cos\phi \xi_{2,\Re}+\sin\phi \xi_{2,\Im}) =r_2 \cos \theta_2\\
\sigma\xi_{1,\Im} -\tau (\sin\phi \xi_{2,\Re}-\cos\phi \xi_{2,\Im})= r_2 \sin \theta_2
\end{array} \right\}
\]
with $r_1, r_2 \in (0,+\infty), \theta_1,\theta_2 \in [0,2\pi]$. Then the expectation can be written as
\begin{eqnarray*}
G(\lambda,\sigma)&:=& \E \xkh{|\xi^* \vh|^2 \cdot \1_{\dkh{  |\xi^* \vx| \le \lambda |\xi^* \vh|}}} \\
&=&\frac{1}{\pi^2} \int_{0}^{2\pi} \int_{0}^{2\pi} \int_{0}^{+\infty} \int_{0}^{\lambda\cdot r_1} \frac{r_1^3 r_2}{\tau^2} e^{-(r_1^2+r_2^2)/\tau^2} \cdot e^{2\sigma r_1r_2 \cos(\theta_1-\theta_2)/\tau^2} d r_2 d r_1 d\theta_1d\theta_2.
\end{eqnarray*}
It gives 
\begin{eqnarray*}
\frac{ \partial G(\lambda,\sigma)}{\partial \lambda}&=& \frac{1}{\pi^2} \int_{0}^{2\pi} \int_{0}^{2\pi} \int_{0}^{+\infty} \lambda \cdot  r_1^5 /\tau^2 \cdot e^{-(1+\lambda^2)r_1^2/\tau^2} \cdot e^{2\sigma\lambda r_1^2 \cos(\theta_1-\theta_2)/\tau^2}  d r_1 d\theta_1d\theta_2\\
&=&  \frac{1}{\pi^2} \int_{0}^{2\pi} \int_{0}^{2\pi}  \frac{\lambda \tau^4}{(1+\lambda^2-2\lambda\sigma \cos(\theta_1-\theta_2))^3} d\theta_1d\theta_2\\
&=& 4\tau^4 \cdot \frac{\lambda(1+\lambda^4+ 2\lambda^2 +2\lambda^2\sigma^2)}{\sqrt{(1+\lambda^2+2\lambda\sigma)^5(1+\lambda^2-2\lambda\sigma)^5}}\\
&\le &  \frac{2\tau^4(\mu_{+}^2 +\mu_{-}^2) }{(\mu_{+} \mu_{-})^{5/2} },
\end{eqnarray*}
where $\mu_{+}:=1+\lambda^2+2\lambda\sigma \ge 0$ and $\mu_{-}:=1+\lambda^2-2\lambda\sigma \ge 0$. Let 
\[
f(\lambda,\sigma):=\frac{\tau^4(\mu_{+}^2 +\mu_{-}^2) }{(\mu_{+} \mu_{-})^{5/2} }.
\]
We next prove that $f(\lambda,\sigma)$ is a decreasing function with respect to $\sigma$ for any fixed $\lambda \le \sqrt{\frac{5-\sqrt{21}}{2}}$. In fact, through some basic algebraic  computation, we have
\begin{eqnarray*}
\frac{\partial f(\lambda,\sigma) }{\partial \sigma}&=&(1-\sigma^2) \cdot \frac{ \lambda(1-\sigma^2)(\mu_{+}-\mu_{-})(5\mu_{+}^2+4\mu_{+}\mu_{-}+5\mu_{-}^2) -4\sigma (\mu_{+}^2 +\mu_{-}^2)\mu_{+}\mu_{-}  }{(\mu_{+} \mu_{-})^{7/2}} \\
&\le &(1-\sigma^2) \sigma (\mu_{+}^2+\mu_{-}^2) \cdot \frac{ 28\lambda^2  (1-\sigma^2) -4 \mu_{+}\mu_{-}  }{(\mu_{+} \mu_{-})^{7/2}} \\
&= &(1-\sigma^2) \sigma (\mu_{+}^2+\mu_{-}^2) \cdot \frac{ 28\lambda^2 - 12\lambda^2\sigma^2 -4(1+\lambda^2)^2 }{(\mu_{+} \mu_{-})^{7/2}}\\
&\le &0,
\end{eqnarray*}
provided $\lambda \le \sqrt{\frac{5-\sqrt{21}}{2}}$.  Note that $G(0,\sigma)=0$. It then immediately gives
\[
G(\lambda,\sigma)\le 2\int_{0}^{\lambda} f(t,\sigma) dt \le 2\int_{0}^{\lambda} f(t,0) dt =4\int_{0}^{\lambda}  \frac{t}{(1+t^2)^3} dt =\frac{\lambda^2(\lambda^2+2)}{(\lambda^2+1)^2},
\]
which completes the claim \eqref{eq:exahsqu}.

\end{proof}

\begin{lemma} \label{le:hxratioxexpec}
Assume  $\lambda \ge 2.95$.  Let $\vx,\vh$ be two fixed vectors in $ \C^n$ with $\norm{\vx}=\norm{\vh}=1$ and  $\Im(\vh^* \vx)=0$. 
Suppose  $\xi \in \C^n$ is a complex Gaussian random vector.   Then  we have
\begin{equation*}
 \E \xkh{\frac{ \Re^2(\vh^* \xi \xi^* \vx )}{ |\xi^* \vx|^2} \cdot \1_{\dkh{\lambda |\xi^* \vx|\ge |\xi^* \vh|}}} \ge \frac 3{8} - \frac 1{(\lambda+1)^2}.
\end{equation*}
\end{lemma}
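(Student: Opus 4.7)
The plan is to reduce the problem to a two-dimensional computation using unitary invariance, lower bound the desired expectation by restricting to a convenient subevent via the triangle inequality, and exploit the resulting independence structure to reduce everything to integrals against two independent exponential variables that can be evaluated in closed form. First, by the unitary invariance of the complex standard Gaussian, I may assume $\vh = \ve_1$ and $\vx = \sigma\ve_1 + \tau\ve_2$ with $\sigma = \vh^*\vx \in \R$ (using $\Im(\vh^*\vx)=0$) and $\tau = \sqrt{1-\sigma^2}\ge 0$. Setting $\alpha = \vh^*\xi$ and $\beta = \vx^*\xi$, and decomposing $\alpha = \sigma\beta + \gamma$ with $\gamma \sim \cN_\C(0,\tau^2)$ independent of $\beta$, I let $r = |\beta|$ and $\eta := \gamma e^{-i\arg\beta} = \eta_1 + i\eta_2$. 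The rotational invariance of $\gamma$ makes $(r,\eta_1,\eta_2)$ mutually independent with $r^2 \sim \mathrm{Exp}(1)$ and $\eta_1,\eta_2\sim\cN(0,\tau^2/2)$. One checks that $\Re(\vh^*\xi\xi^*\vx)/|\xi^*\vx| = \sigma r + \eta_1 =: W_1$ and $|\vh^*\xi|^2 = (\sigma r + \eta_1)^2 + \eta_2^2$, so the target expectation equals $\E\big[W_1^2\,\1_{\dkh{(\sigma r + \eta_1)^2 + \eta_2^2 \le \lambda^2 r^2}}\big]$.

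Next, to obtain a clean lower bound I replace the indicator event by a smaller one using the triangle inequality $|\sigma r + \eta| \le |\sigma| r + |\eta|$; since $\lambda \ge 2.95 > |\sigma|$, the event $\tilde E := \dkh{\eta_1^2 + \eta_2^2 \le (\lambda - |\sigma|)^2 r^2}$ is contained in the original indicator set. Expanding $W_1^2 = \sigma^2 r^2 + 2\sigma r\eta_1 + \eta_1^2$, the cross term has zero expectation because $\tilde E$ is invariant under $\eta_1 \mapsto -\eta_1$, and the $\eta_1 \leftrightarrow \eta_2$ symmetry yields $\E[\eta_1^2 \1_{\tilde E}\mid r] = \tfrac{1}{2}\E[(\eta_1^2+\eta_2^2) \1_{\tilde E}\mid r]$. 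With $k := (\lambda - |\sigma|)^2/\tau^2$, the standard identities $\E[V \1_{\dkh{U \le kV}}] = k(k+2)/(k+1)^2$ and $\E[U \1_{\dkh{U\le kV}}] = k^2/(k+1)^2$ for independent $U, V \sim \mathrm{Exp}(1)$ then give
\[
\E[W_1^2 \1_{\tilde E}] = \frac{2\sigma^2 k(k+2) + \tau^2 k^2}{2(k+1)^2} \ge \frac{k^2}{2(k+1)^2}.
\]

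Finally, since $k^2/(k+1)^2$ is increasing in $k$, it remains to minimize $k(\sigma) = (\lambda - |\sigma|)^2/(1-\sigma^2)$ on $[0,1)$. A routine calculation gives $dk/d\sigma = 2(\lambda - \sigma)(\sigma\lambda - 1)/(1-\sigma^2)^2$, so $k$ is minimized at $\sigma = 1/\lambda$ with $k_{\min} = \lambda^2 - 1$. Hence the expectation in the lemma is at least $(\lambda^2 - 1)^2/(2\lambda^4) = \tfrac{1}{2} - \tfrac{1}{\lambda^2} + \tfrac{1}{2\lambda^4}$. The only delicate remaining step, which I expect to be the main obstacle, is the purely algebraic inequality $\tfrac{1}{2} - \tfrac{1}{\lambda^2} + \tfrac{1}{2\lambda^4} \ge \tfrac{3}{8} - \tfrac{1}{(\lambda+1)^2}$, equivalently $\tfrac{1}{8} - \tfrac{1}{\lambda^2} + \tfrac{1}{(\lambda+1)^2} + \tfrac{1}{2\lambda^4} \ge 0$, valid for all $\lambda \ge 2.95$. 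I would verify this by evaluating at $\lambda = 2.95$ (direct arithmetic yields a positive value $\approx 0.08$) and then establishing monotonicity of the left-hand side on $[2.95, \infty)$ via a derivative estimate, completing the proof.
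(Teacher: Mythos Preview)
Your argument is correct and takes a genuinely different route from the paper's proof. The paper works directly in polar coordinates on the four real Gaussian variables, writes the expectation $F(\lambda,\sigma)$ as a double integral, differentiates in $\lambda$, expands the result as a power series in $\sigma$, and then controls the tail of that series via Wallis' inequality; this is accurate but computationally heavy. Your approach instead exploits the decomposition $\alpha=\sigma\beta+\gamma$ with $\gamma$ independent of $\beta$, together with the rotation $\eta=\gamma e^{-i\arg\beta}$, to reduce everything to two independent exponential variables. Passing to the smaller symmetric subevent $\tilde E$ via the triangle inequality is the key simplification: it kills the cross term, makes $\eta_1^2$ and $\eta_2^2$ exchangeable, and yields closed-form expressions $k(k+2)/(k+1)^2$ and $k^2/(k+1)^2$ without any series expansion. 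The remaining one-variable optimization over $\sigma$ and the elementary inequality in $\lambda$ are routine, and your sketch for the final algebraic step (value at $\lambda=2.95$ plus monotonicity of $g(\lambda)=\tfrac18-\tfrac{1}{\lambda^2}+\tfrac{1}{(\lambda+1)^2}+\tfrac{1}{2\lambda^4}$) goes through: $g'(\lambda)=2\lambda^{-3}-2(\lambda+1)^{-3}-2\lambda^{-5}>0$ for $\lambda\ge 2.95$ reduces to $\lambda^2(3\lambda^2+3\lambda+1)\ge(\lambda+1)^3$, which is immediate. What you lose relative to the paper is some sharpness (you discard the $\sigma^2 k(k+2)$ contribution and shrink the indicator event), but the target bound $\tfrac38-\tfrac{1}{(\lambda+1)^2}$ is loose enough that this costs nothing; what you gain is a substantially shorter and more transparent proof.
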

\begin{proof}
Due to the unitary invariance of Gaussian random vector, without loss of generality, we assume $\vh=\ve_1$ and $\vx=\sigma \ve_1 + \tau e^{i \phi} \ve_2$, where $\sigma=\vh^*\vx \in \R, \abs{\sigma} \le 1$ and $\tau=\sqrt{1-\sigma^2} $.
Let $\xi_1, \xi_2$ be the first and second entries of $\xi$. Denote $\xi_1=\xi_{1,\Re}+ i \xi_{1,\Im}$ and $\xi_2=\xi_{2,\Re}+ i \xi_{2,\Im}$ where $\xi_{1,\Re}, \xi_{1,\Im}, \xi_{2,\Re}, \xi_{2,\Im} $
are independent Gaussian random variables with distribution  $\mathcal{N}(0,1/2)$. Then the inequality $ \lambda |\xi^* \vx| \ge |\xi^* \vh|$ is equivalent to 
\begin{eqnarray*}
\lambda \sqrt{\xkh{\sigma\xi_{1,\Re} +\tau (\cos\phi \xi_{2,\Re}+\sin\phi \xi_{2,\Im}) }^2+\xkh{\sigma\xi_{1,\Im} -\tau (\sin\phi \xi_{2,\Re}-\cos\phi \xi_{2,\Im}) }^2 } \ge \sqrt{(\xi_{1,\Re}^2+\xi_{1,\Im}^2 )}.
\end{eqnarray*}
To obtain the conclusion, we take the polar coordinates transformations and denote
\[
\left.
\begin{array}{r}
\xi_{1,\Re}= r_1 \cos \theta_1 \\
\xi_{1,\Im}= r_1 \sin \theta_1 \\
\sigma\xi_{1,\Re} +\tau (\cos\phi \xi_{2,\Re}+\sin\phi \xi_{2,\Im}) =r_2 \cos \theta_2\\
\sigma\xi_{1,\Im} -\tau (\sin\phi \xi_{2,\Re}-\cos\phi \xi_{2,\Im})= r_2 \sin \theta_2
\end{array} \right\}
\]
with $r_1, r_2 \in (0,+\infty), \theta_1,\theta_2 \in [0,2\pi]$.  It is easy to check that 
\begin{eqnarray*}
 \Re (\vh^* \xi \xi^* \vx ) &=& \Re\xkh{(\sigma \xi_1 + \tau e^{-i \phi} \xi_2) \bar{\xi}}\\
 &=& \xi_{1,\Re} \xkh{\sigma\xi_{1,\Re} +\tau (\cos\phi \xi_{2,\Re}+\sin\phi \xi_{2,\Im}) }+\xi_{1,\Im}\xkh{\sigma\xi_{1,\Im} -\tau (\sin\phi \xi_{2,\Re}-\cos\phi \xi_{2,\Im})}\\
 &=& r_1r_2 \cos(\theta_1-\theta_2).
\end{eqnarray*}
It means the expectation can be written as
\begin{eqnarray*}
F(\lambda,\sigma)&:=& \E \xkh{\frac{ \Re^2(\vh^* \xi \xi^* \vx )}{|\xi^* \vx|^2} \cdot \1_{\dkh{\lambda |\xi^* \vx|\ge |\xi^* \vh|}}}\\
&=&\frac{1}{\pi^2} \int_{0}^{2\pi} \int_{0}^{2\pi} \int_{0}^{+\infty} \int_{0}^{\lambda\cdot r_2} \frac{r_1^3 r_2}{\tau^2} \cdot  \cos^2(\theta_1-\theta_2) \cdot e^{-(r_1^2+r_2^2)/\tau^2} \cdot e^{2\sigma r_1r_2 \cos(\theta_1-\theta_2)/\tau^2} d r_1 d r_2 d\theta_1d\theta_2\\
&=& 2\sum_{k=0}^{\infty} \frac{2k+1}{(k!)^2\cdot (k+1)} \cdot \frac{\sigma^{2k}}{\tau^{4k+2}} \int_{0}^{+\infty} \int_{0}^{\lambda\cdot r_2} r_1^{2k+3} r_2^{2k+1} \cdot e^{-(r_1^2+r_2^2)/\tau^2}  d r_1 d r_2,
\end{eqnarray*}
where the last equation follows from the fact
\[
\int_{0}^{2\pi} \int_{0}^{2\pi}  \cos^{2k}(\theta_1-\theta_2) d\theta_1d\theta_2=\frac{(2k-1)!!}{2^{k-2}\cdot k!}
\]
for any integer $k$. To evaluate $F(\lambda,\sigma)$, we first take the derivative and then obtain
\begin{eqnarray*}
\frac{ \partial F(\lambda,\sigma)}{\partial \lambda}&:=&2\sum_{k=0}^{\infty} \frac{2k+1}{(k!)^2\cdot (k+1)} \cdot \frac{\sigma^{2k}}{\tau^{4k+2}} \int_{0}^{+\infty}  \lambda^{2k+3} r_2^{4k+5} \cdot e^{-(1+\lambda^2)r_2^2/\tau^2}   d r_2 \\
&= &  2\sum_{k=0}^{\infty} \frac{(2k+1)!(2k+1)}{(k!)^2} \cdot \sigma^{2k}(1-\sigma^2)^2 \cdot \xkh{\frac{\lambda}{1+\lambda^2}}^{2k+3}.
\end{eqnarray*}
Since $F(0,\sigma)=0$, it implies that
\begin{equation} \label{eq:Fvalue}
F(\lambda,\sigma)=2\sum_{k=0}^{\infty} \frac{(2k+1)!(2k+1)}{(k!)^2} \cdot \sigma^{2k}(1-\sigma^2)^2 \cdot \int_0^{\lambda} \xkh{\frac{t}{1+t^2}}^{2k+3} dt.
\end{equation}
With this in place, all we need to do is to lower bound the integral $\int_0^{\lambda} \xkh{\frac{t}{1+t^2}}^{2k+3} dt$. Note that 
\[
\int_0^{\lambda} \xkh{\frac{t}{1+t^2}}^{2k+3} dt =\int_0^{1} \xkh{\frac{t}{1+t^2}}^{2k+3} dt+\int_1^{\lambda} \xkh{\frac{t}{1+t^2}}^{2k+3} dt := \RNum{1}+\RNum{2}.
\]
For the first term, let $t=\tan\theta$. It then gives
\begin{eqnarray}
 \RNum{1} &=& \int_0^{\frac \pi4} \sin^{2k+3}\theta \cos^{2k+1}\theta~ d\theta \nonumber\\
 &=& \frac{1}{2^{2k+2}} \int_0^{\frac \pi4}  \sin^{2k+1}(2\theta)(1-\cos(2\theta))~d\theta \nonumber \\
  &=&\frac{k!}{(2k+1)!!\cdot 2^{k+3}}-\frac 1{2(k+1)\cdot 2^{2k+3}}. \label{eq:firstterm}
\end{eqnarray}
For the second term, noting that $\lambda\ge 1$, we have
\begin{equation} \label{eq:secterm}
 \RNum{2} \ge  \int_1^{\lambda} (1+t)^{-2k-3} dt=\frac 1{2(k+1)} \cdot \xkh{\frac 1{2^{2k+2}}-\frac1{(\lambda+1)^{2k+2}}}.
\end{equation}
Putting  \eqref{eq:firstterm} and \eqref{eq:secterm}   into \eqref{eq:Fvalue}, we have 
\begin{eqnarray*}
F(\lambda,\sigma)&\ge&\sum_{k=0}^{\infty} \frac{(2k+1)!! (2k+1)}{(k+1)!} \cdot \sigma^{2k}(1-\sigma^2)^2 \cdot \xkh{\frac{(k+1)!}{4(2k+1)!!}+\frac 1{2^{k+3}}-\frac{2^{k}}{(1+\lambda)^{2k+2}}}.
\end{eqnarray*}
Let $\beta:=(1+\lambda)^2$. Expand $F(\lambda,\sigma)$ into a series with respect to $\sigma$ and we have
\begin{equation} \label{eq:Fseriesform}
F(\lambda,\sigma)\ge  \frac 38 -\frac 1{\beta}+\frac 9{16} \sigma^2- \sum_{k=1}^{\infty} \frac{(2k-1)!!(2k+7)}{2^{k+4}(k+2)!}\sigma^{2k+2}-(1-\frac 4{\beta})^2\cdot \sum_{k=1}^{\infty} \frac{2^k (2k-1)!!}{(k-1)! \beta^k}\sigma^{2k+2},
\end{equation}
where we use the fact that  $\lambda \ge 2.95$ in the above inequality. Next, we need to upper bound the last two series. From Wallis' inequality \cite{kazarinoff}, we know
\[
\frac{(2k-1)!!}{2^k k!}\le \frac 1{\sqrt{2k}}.
\]
Thus 
\begin{eqnarray}
 \sum_{k=1}^{\infty} \frac{(2k-1)!!(2k+7)}{2^{k+4}(k+2)!}\sigma^{2k+2} & \le &  \frac {\sigma^{2}}{16} \cdot \sum_{k=1}^{\infty} \frac{2k+7}{ \sqrt{2k}(k+2)(k+1)} \nonumber\\
 &\le &  \frac {\sigma^{2}}{8}\cdot \sum_{k=1}^{\infty} \frac{1}{k^{3/2}}\nonumber\\
 &\le &  \frac {\sigma^{2}}{8} \xkh{1+\int_1^{\infty} \frac{1}{t^{3/2}}~dt}\nonumber\\
 &=&  \frac {3}{8}\sigma^{2}, \label{eq:firserei1}
\end{eqnarray}
where the second inequality follows from the fact that $\frac{2k+7}{(k+2)(k+1)} \le \frac{2\sqrt{2}}{k}$ for all $k\ge 1$.

On the other hand, using Wallis' inequality again, we have
\begin{eqnarray}
(1-\frac 4{\beta})^2\cdot \sum_{k=1}^{\infty} \frac{2^k (2k-1)!!}{(k-1)! \beta^k}\sigma^{2k+2} & \le & (1-\frac 4{\beta})^2\cdot \sigma^2  \cdot \sum_{k=1}^{\infty} \frac{4^k\cdot  k}{\sqrt{2k} \beta^k} \nonumber\\
 &\le &  (1-\frac 4{\beta})^2\cdot \frac{\sigma^2}{\sqrt{2}}  \cdot \sum_{k=1}^{\infty} k\xkh{\frac{4}{\beta}}^k\nonumber \\
 &= &  \frac {2\sqrt{2}}{\beta} \sigma^{2}, \label{eq:secseries2}
\end{eqnarray}
where the last equation follows from the fact that 
\[
\sum_{k=1}^{\infty}  k x^{k-1} =\frac{1}{(1-x)^2}  \quad \mbox{for all} \quad 0\le x<1.
\]
Putting \eqref{eq:firserei1} and \eqref{eq:secseries2} into \eqref{eq:Fseriesform}, we know that for $\lambda\ge 2.95$, it holds
\[
F(\lambda,\sigma)\ge  \frac 38 -\frac 1{\beta}.
\]
This completes the proof.

\end{proof}

\end{document}